\definecolor{link}{RGB}{240,90,90}
\definecolor{activelink}{RGB}{60,179,113}
\definecolor{inactivelink}{RGB}{75,75,75}
\definecolor{extblue}{RGB}{0,0,255}
\definecolor{intblue}{RGB}{0,140,195}
\definecolor{extgray}{RGB}{100,100,100}
\definecolor{intgray}{RGB}{175,175,175}
\definecolor{trugreen}{rgb}{0.0, 0.33, 0.0}
\newcommand{\leftnorm}{\left|\!\left|\!\left|}
\newcommand{\rightnorm}{\right|\!\right|\!\right|}
\DeclareMathOperator*{\proj}{proj}
\newcommand\oT{\mathsf{T}}
\newcommand\oU{\mathsf{U}}
\newcommand\oV{\mathsf{V}}
\newcommand\oI{\mathsf{I}}
\newcommand{\prox}{\mathop{\mathrm{prox}}\nolimits}
\theoremstyle{plain} 
\newtheorem{lemma}{Lemma}
\newtheorem{theorem}{Theorem}
\theoremstyle{definition}
\newtheorem{assumption}{Assumption}
\theoremstyle{remark} 
\newtheorem{remark}{Remark}
\newcommand{\cH}{{\cal H}}
\newcommand{\NN}{{\mathbb{N}}}
\newcommand{\un}{{\bs 1}}
\DeclareMathOperator*{\fix}{fix}
\DeclareMathOperator*{\ri}{ri}
\DeclareMathOperator*{\dom}{dom}
\DeclareMathOperator*{\argmin}{argmin}
\newcommand{\bs}{\boldsymbol}
\newcommand{\EE}{{\mathbb{E}}}
\def\adots{
  \mathinner{\mkern1mu\raise1pt\hbox{.}\mkern2mu\raise4pt\hbox{.}
  \mkern2mu\raise7pt\vbox{\kern7pt\hbox{.}}\mkern1mu}}
\def\build#1_#2^#3{\mathrel{
\mathop{\kern 0pt#1}\limits_{#2}^{#3}}}
\begin{document}

\title{A Coordinate Descent Primal-Dual Algorithm \\ 
and Application to Distributed {Asynchronous} Optimization}

\author{P. Bianchi, W. Hachem and F. Iutzeler
 \thanks{The first two authors are with the CNRS LTCI; Telecom ParisTech,  
  Paris, France. The third author is with LJK, Université Joseph Fourier, Grenoble, France.  
  E-mails: \texttt{pascal.bianchi, walid.hachem@telecom-paristech.fr,
  franck.iutzeler@imag.fr}. 
  This work was granted by the French Defense Agency (DGA) ANR-grant
  ODISSEE.}}  



\maketitle

\begin{abstract}  Based on the idea of randomized coordinate descent of
$\alpha$-averaged operators, a randomized primal-dual optimization algorithm is
introduced, where a random subset of coordinates is updated at each iteration.
The algorithm builds upon a variant of a recent (deterministic) algorithm
proposed by V\~u and Condat that includes the well known ADMM as a particular
case. The obtained algorithm is used to solve asynchronously a distributed
optimization problem. A network of agents, each having a separate cost function
 containing a differentiable term, seek to find a consensus on the
minimum of the aggregate objective. The method yields an algorithm where at
each iteration, a random subset of agents wake up, update their local
estimates, exchange some data with their neighbors, and go idle. Numerical
results demonstrate the attractive performance of the method. \\ 
The general approach can be naturally adapted to other situations where
coordinate descent convex optimization algorithms are used with a random choice
of the coordinates. 

\end{abstract}

\begin{keywords}
Distributed Optimization, Coordinate Descent, Consensus algorithms, Primal-Dual Algorithm.
\end{keywords} 

\section{Introduction}

Let ${\cal X}$ and ${\cal Y}$ be two Euclidean spaces and let $M :  
{\cal X} \to {\cal Y}$ be a linear operator. 
Given two real convex functions $f$ and $g$ on $\cal X$ and a real convex
function $h$ on $\cal Y$, we consider the minimization problem  
\begin{equation}
\label{pb} 
\inf_{x\in\cal X}  f(x) + g(x) + h(Mx) 
\end{equation}
where  $f$ is differentiable and its gradient $\nabla f$ is
Lipschitz-continuous. {\color{black} Although our theoretical contributions
are valid for very general functions $f$, $g$ and $h$, the application part of this paper
puts a special emphasis on the problem of distributed optimization. In this particular framework,
one considers a set of $N$ agents such that each agent $n=1,\dots,N$ has a private cost 
of the form $f_n+g_n$ where $f_n$ and $g_n$ are two convex cost function on some (other) space $\overline {{\cal X}}$, 
$f_n$ being differentiable. The aim is to distributively solve
\begin{equation}
\inf_{u\in \overline{\cal X}} \sum_{n=1}^N f_n(u) + g_n(u)\ .\label{eq:1}
\end{equation}
In order to construct distributed algorithms a standard approach consists in
introducing
$$
f(x) = \sum_{n=1}^N f_n(x_n)\quad \text{and}\quad g(x) = \sum_{n=1}^N g_n(x_n)
$$
for all $x=(x_1,\dots,x_N)$ in the product space ${\cal X}=\overline{\cal X}^N$.
Obviously, problem~(\ref{eq:1}) is equivalent
to the minimization of $f(x)+g(x)$ under the constraint that all components of $x$ are equal
\emph{i.e.}, $x_1=\dots =x_N$. Therefore, Problem~(\ref{eq:1}) is in fact a special instance
of Problem~(\ref{pb}) if one chooses $h(Mx)$ as an indicator function, equal to zero if $x_1=\dots =x_N$ and to $+\infty$ otherwise. As we shall see, this reformulation of Problem~(\ref{eq:1}) is often 
a mandatory step in the construction of distributed algorithms.}

Our contributions are as follows. 
\begin{enumerate}
\item V\~u and Condat have separately proposed an algorithm to solve (\ref{pb})
in \cite{vu2013splitting} and \cite{cond-13} respectively.
Elaborating on this algorithm, we provide an iterative algorithm for
solving~(\ref{pb}) which we refer to as ADMM+ (Alternating
Direction Method of Multipliers plus) because it includes the well
known ADMM~\cite{gabay1976dual,gab-83} as {the special case corresponding to 
$f=0$. Interestingly, in the framework of the distributed optimization, ADMM+ 
is provably convergent under weaker assumptions on the step sizes as compared 
to the original V\~u/Condat algorithm.}  

\item {Based on the idea of the \emph{stochastic coordinate descent} who has
been mainly studied in the literature in the special case of proximal gradient
algorithms \cite{nesterov2012,fercoq2013,bavcak2013}, we develop a
\emph{distributed asynchronous} version of ADMM+.  As a first step, we borrow
from \cite{vu2013splitting}-\cite{cond-13} the idea that their algorithm is an
instance of a so-called Krasnosel'skii-Mann iteration applied to an
$\alpha$-averaged operator~\cite[Section 5.2]{livre-combettes}. Such operators
have contraction-like properties that make the Krasnosel'skii-Mann iterations
converge to a fixed point of the operator. The principle of the stochastic
coordinate descent algorithms is to update only a random subset of coordinates
at each iteration.  In this paper, we show in most generality that a randomized
coordinate descent version of the Krasnosel'skii-Mann iterations still
converges to a fixed point of an $\alpha$-averaged operator. This provides 
as a side result a convergence proof of the stochastic coordinate descent 
versions of the proximal gradient algorithm, since this algorithm can be 
seen as the application of a $1/2$-averaged operator~\cite{livre-combettes}. 
More importantly in the context of this paper,
this idea leads to provably convergent asynchronous distributed versions of
ADMM+.}

\item 
{Putting together both ingredients above,
we apply our findings to 
asynchronous distributed optimization. First, the optimization
problem~\eqref{pb} is rewritten in a form where the operator $M$ encodes the
connections between the agents within a graph in a manner similar
to~\cite{sch-rib-gia-sp08}. Then, a distributed optimization algorithm for
solving Problem~(\ref{eq:1}) is obtained by applying ADMM+. 
Using the idea of
coordinate descent on the top of the algorithm, we then obtain a fully
asynchronous distributed optimization algorithm that we refer to as Distributed
Asynchronous Primal Dual algorithm (DAPD). At each iteration, an independent an
identically distributed random subset of agents wake up, apply essentially
the proximity operator on their local functions, send some estimates to
their neighbors and go idle.}  
\end{enumerate}


{An algorithm that has some formal resemblance with ADMM+ was proposed
in~\cite{ouyang2013}, who considers the minimization of the sum of two
functions, one of them being subjected to noise. This reference includes a
linearization of the noisy function in ADMM iterations.} 

{The use of stochastic coordinate descent on averaged operators has been
introduced in~\cite{iut-cdc13} (see also the recent preprint
\cite{pesquet-stochastic} which uses the same line of thought).  Note that the
approach of~\cite{iut-cdc13} was limited to unrelaxed firmly non expansive (or
$1/2$-averaged) operators, well-suited for studying ADMM which was the
algorithm of interest in~\cite{iut-cdc13}.}   

Asynchronous distributed optimization is a promising framework in order to scale up machine learning problems involving massive data sets
(we refer to \cite{boyd2011distributed} or the recent survey \cite{cevher2014convex}). 
Early works on distributed optimization include \cite{tsitsiklis:phd-1984,tsitsiklis:bertsekas:athans:tac-1986} where a network
of processors seeks to optimize some objective function known by all
agents (possibly up to some additive noise). More recently, numerous works
extended this kind of algorithm to more involved multi-agent scenarios,
see~\cite{kushner-siam87,lop-sayed-asap06,nedic:ozdaglar:parrilo:tac-2010,bia-for-hac-IT13,bia-jak-TAC13,nedic2013distributed,iutzeler2013explicit,tsianos2014efficient,mokhtariapproximate,jaggi2014communication,morral2014success,shi2014extra}.


Note that standard first order distributed optimization methods often rely on the so-called adaptation-diffusion approaches or variants.
The agents update their local estimates by evaluating their private gradient and then merge their estimate with their neighbors
using a local averaging step. 
Unfortunately, such methods require the use of a vanishing step size, which results in slow convergence. 
This paper proposes a first-order distributed optimization method with constant step size, which turns out to 
outperform standard distributed gradient methods, as shown in the simulations.

{\color{black} To the best of our knowledge, our method
is the first distributed algorithm combining the following attractive features:
  \begin{enumerate}
  \item The algorithm is asynchronous at the \emph{node-level}. 
Only a single node is likely to be active at a given iteration, only broadcasting the result of its computation
without expecting any feedback from other nodes.
This is in contrast with the asynchronous ADMM studied by \cite{iut-cdc13} and \cite{wei-ozd-arxiv13} which is
only asynchronous at the \emph{edge-level}. In these works, at least two connected nodes are supposed 
to be active at a common time. 
\item The algorithm is a \emph{proximal method}. Similarly to the distributed ADMM, it allows for the use of a proximity operator
at each node. This is especially important to cope with the presence of possibly non-differentiable regularization terms.
This is unlike the classical adaptation-diffusion methods mentioned above or the more recent first order distributed algorithm EXTRA proposed by\cite{shi2014extra}.
\item The algorithm is a \emph{first-order method}. Similarly to adaptation-diffusion methods, our algorithm allows
to compute gradients of the local cost functions. This is unlike the distributed ADMM which only admits implicit steps \emph{i.e.}, agents
are required to locally solve an optimization problem at each iteration.
\item The algorithm admits \emph{constant step size}. As remarked in \cite{shi2014extra}, standard adaptation-diffusion methods 
require the use of a vanishing step size to ensure the convergence to the sought minimizer. In practice, this comes at the price of slow convergence.
Our method allows for the use of a constant step size in the gradient descent step.
  \end{enumerate}
}

The paper is organized as
follows. Section~\ref{sec:pda} is devoted to the the introduction of
ADMM+ algorithm and its relation with the Primal-Dual algorithms
of V\~u\cite{vu2013splitting} and Condat~\cite{cond-13}, we also show
how ADMM+ includes both the standard ADMM and the Forward-Backward
algorithm (also refered to as proximal gradient algorithm) as special cases
{\color{black}\cite[Section 25.3]{livre-combettes}}. 
In Section~\ref{sec:cd}, we provide our result on
the convergence of Krasnosel'skii-Mann iterations with randomized
coordinate descent. 
Section~\ref{sec:distop} addresses the problem of asynchronous 
distributed optimization.
Finally, Section~\ref{sec:num} provides numerical results.

\section{A Primal Dual Algorithm} 
\label{sec:pda}

\subsection{Problem statement}

We consider Problem~(\ref{pb}).
Denoting by $\Gamma_0({\cal X})$ the set of proper lower semi-continuous convex 
functions on ${\cal X} \to (-\infty, \infty]$ and by $\| \cdot \|$ the norm 
on ${\cal X}$, we make the following assumptions: 
\begin{assumption}
\label{cvx}
The following facts hold true: 
\begin{enumerate}[(i)]
 \item $f$ is a convex differentiable function on ${\cal X}$,
 \item $g\in \Gamma_0({\cal X})$ and $ h\in \Gamma_0({\cal Y})$.
  \end{enumerate}
\end{assumption}


We consider the case where $M$ is injective (in particular, it is implicit 
that $\text{dim}({\cal X})\leq \text{dim}({\cal Y})$).
In the latter case, we denote by ${\cal S}=\text{Im}(M)$ the image of $M$ and 
by $M^{-1}$ the inverse of $M$ on ${\cal S}\to{\cal X}$. We emphasize the fact 
that the inclusion $\cal S \subset \cal Y$ might be strict. 
We denote by $\nabla$ the gradient operator.
\begin{assumption}
\label{hyp:M2} The following facts hold true:
 \begin{enumerate}[(i)]
 \item $M$ is injective\,,
 \item $\nabla (f\circ M^{-1})$ is $L$-Lipschitz continuous on $\cal S$.
 \end{enumerate}
\end{assumption}

We denote by $\dom q$ the domain of a function $q$ and by $\ri S$ the 
relative interior of a set $S$ in a Euclidean space. 
\begin{assumption}
\label{qualif}
The infimum of Problem~\eqref{pb} is attained. Moreover, the following 
qualification condition holds 
\[
0 \in \ri(\dom h - M \dom g)
\] 
{where $M \dom g$ is the image by $M$ of $\dom g$}. 
\end{assumption}

The \emph{dual} problem corresponding to the \emph{primal} problem~\eqref{pb} 
is written 
\[
\inf_{\lambda \in {\cal Y}}  (f + g)^*(-M^*\lambda) + h^*(\lambda) 
\]
where $q^*$ denotes the Legendre-Fenchel transform of a function $q$ and
where $M^*$ is the adjoint of $M$. With the
assumptions~\ref{cvx} and~\ref{qualif}, the classical Fenchel-Rockafellar 
duality theory \cite{Roc70,livre-combettes} shows that 
\begin{multline} 
\min_{x\in\cal X}  f(x) + g(x) + h(Mx)  \\
= - 
\inf_{\lambda \in {\cal Y}}  (f + g)^*(-M^*\lambda) + h^*(\lambda),
\label{duality} 
\end{multline} 
and the infimum at the right hand member is attained. Furthermore, denoting by
$\partial q$ the subdifferential of a function $q \in \Gamma_0({\cal X})$,
any point $(\bar x, \bar\lambda) \in {\cal X} \times {\cal Y}$ at which the above 
equality holds satisfies 
\[
\left\{
  \begin{array}[h]{l}
    0\in \nabla f(\bar x) + \partial g(\bar x) + M^*\bar\lambda \\
    0 \in -M \bar x + \partial h^*(\bar\lambda)  
  \end{array}
\right.
\] 
and conversely. Such a point is called a \emph{primal-dual} point. 

\subsection{A Primal-Dual Algorithm}
\label{sec:algo}

We denote by $\langle\,\cdot,\cdot\,\rangle$ the inner product on
$\cal X$.  We keep the same notation $\|\cdot\|$ to represent the norm
on both $\cal X$ and $\cal Y$.  For some parameters $\rho,\tau>0$, we
consider the following algorithm which we shall refer to as
ADMM+.\smallskip

\noindent   \textbf{ADMM+} 
\vspace*{-.1cm} 
  \begin{subequations} 
    \begin{align}
z^{k+1} &= \argmin_{z\in {\cal Y}}
\Bigl[ h(z)+\frac{\|z-(Mx^{k}+\rho\lambda^k)\|^2}{2\rho} \Bigr] \label{adm+-z}\\
\lambda^{k+1} &= \lambda^k + \rho^{-1}(Mx^{k}-z^{k+1}) \label{adm+-l} \\
      u^{k+1} &= (1-\tau\rho^{-1})Mx^{k} + \tau\rho^{-1}z^{k+1} 
\label{adm+-u} \\ 
x^{k+1} &= \argmin_{x\in {\cal X}} \Bigl[ g(x)+\langle\nabla f(x^k),x\rangle 
           \nonumber \\
    & 
  \ \ \ \ \ \ \ \ \ \ \ \ \ \ \ \ 
     + \frac {\|Mx-u^{k+1}+\tau\lambda^{k+1}\|^2}{2\tau} \Bigr] \label{adm+-x} 
    \end{align}
  \end{subequations} 

{
This algorithm is especially useful in the situations where $\nabla f$ and 
the left hand member of Equation~\eqref{adm+-x} are both easy to compute,
as it is the case when (say) $f$ is quadratic and $g$ is an $\ell_1$ 
regularization term. In such situations, working directly on $f+g$ is often 
computationally demanding.} 

\begin{theorem}
\label{the:cv}
Let Assumptions~\ref{cvx}--\ref{qualif} hold true. Assume that  
$\tau^{-1} - \rho^{-1} > L/2$. 
For any initial value $(x^0, \lambda^0) \in {\cal X} \times {\cal Y}$, the 
sequence $(x^k, \lambda^k)$ defined by ADMM+ converges to a primal-dual point 
$(x^\star,\lambda^\star)$ of~\eqref{duality} as $k\to\infty$. 
\end{theorem}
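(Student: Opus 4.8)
The plan is to follow the route used by V\~u~\cite{vu2013splitting} and Condat~\cite{cond-13} for their primal-dual schemes: exhibit ADMM+ as a Krasnosel'skii--Mann iteration $\xi^{k+1}=T\xi^k$ for an $\alpha$-averaged operator $T$ on a suitably renormed Euclidean space, invoke the convergence of such iterations towards $\fix T$~\cite[Section~5.2]{livre-combettes}, and then identify $\fix T$ with the set of primal-dual points of~\eqref{duality}. The first step is the change of variable $y^k=Mx^k$, legitimate since $M$ is injective (Assumption~\ref{hyp:M2}(i)). Let $\widehat f=(f\circ M^{-1})\circ\proj_{{\cal S}}$ and $\widehat g=(g\circ M^{-1})+\iota_{{\cal S}}$ be the corresponding extensions to ${\cal Y}$; by Assumptions~\ref{cvx} and~\ref{hyp:M2}(ii), $\widehat f$ is convex with $L$-Lipschitz gradient on ${\cal Y}$ and $\widehat g\in\Gamma_0({\cal Y})$. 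Eliminating $z^{k+1}$ and $u^{k+1}$ from~\eqref{adm+-z}--\eqref{adm+-x} and using the Moreau identity, one rewrites ADMM+, with $\sigma=\rho^{-1}$, as
\begin{align*}
\lambda^{k+1}&=\prox_{\sigma h^*}\bigl(\lambda^k+\sigma y^k\bigr),\\
y^{k+1}&=\prox_{\tau\widehat g}\bigl(y^k-\tau\nabla\widehat f(y^k)\\
&\qquad{}-\tau(2\lambda^{k+1}-\lambda^k)\bigr),
\end{align*}
that is, exactly the V\~u/Condat iteration for $\min_{y\in{\cal Y}}\bigl(\widehat f(y)+\widehat g(y)+h(y)\bigr)$ \emph{with the linear operator reduced to the identity}; the iterates $(x^k,\lambda^k)$ are recovered from $\xi^k=(y^k,\lambda^k)$ through the continuous affine map $x^k=M^{-1}y^k$, with due care for the possibly strict inclusion ${\cal S}\subset{\cal Y}$.

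The second step reads off from this reformulation the operator $T$ on ${\cal S}\times{\cal Y}$ together with the V\~u--Condat preconditioner $P$ --- the symmetric block operator with diagonal blocks $\tau^{-1}\Id$, $\sigma^{-1}\Id$ and off-diagonal blocks $-\Id$ --- and shows that $T$ is $\alpha$-averaged for some $\alpha\in(0,1)$ in the Hilbert space $\bigl({\cal S}\times{\cal Y},\langle P\,\cdot,\cdot\rangle\bigr)$. This is where the assumption $\tau^{-1}-\rho^{-1}>L/2$ intervenes, in two ways: first, $P\succ0$ as soon as $\tau^{-1}\sigma^{-1}>1$, i.e.\ $\tau<\rho$, which holds since $\tau^{-1}-\rho^{-1}>L/2\ge0$; second, $\nabla\widehat f$ being $L$-Lipschitz is $(1/L)$-cocoercive by the Baillon--Haddad theorem, and inserting this cocoercivity inequality into the expansion of $\|T\xi-T\xi'\|_P^2$ is exactly what the slack $\tau^{-1}-\rho^{-1}-L/2>0$ absorbs, giving an averaging constant $\alpha<1$. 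One may here either quote the convergence statement of~\cite{cond-13,vu2013splitting} or redo this computation; the latter is natural, since the same pattern is reused for the coordinate-descent version of the Krasnosel'skii--Mann iteration in Section~\ref{sec:cd}.

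It remains to locate $\fix T$ and conclude. By Assumptions~\ref{cvx} and~\ref{qualif}, Fenchel--Rockafellar duality~\eqref{duality} ensures that a primal-dual point $(\bar x,\bar\lambda)$ exists --- a solution of $0\in\nabla f(\bar x)+\partial g(\bar x)+M^*\bar\lambda$ and $0\in-M\bar x+\partial h^*(\bar\lambda)$ --- and rewriting these inclusions in the $y$-variable shows them equivalent to $(M\bar x,\bar\lambda)\in\fix T$, so $\fix T\neq\emptyset$. Since the space is finite-dimensional and $T$ is $\alpha$-averaged, the Krasnosel'skii--Mann theorem~\cite[Section~5.2]{livre-combettes} gives $\xi^k\to\xi^\star$ for some $\xi^\star\in\fix T$; applying the affine reconstruction map yields $(x^k,\lambda^k)\to(x^\star,\lambda^\star)$, which by the correspondence above is a primal-dual point of~\eqref{duality}.

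The main obstacle is the second step: choosing the correct metric $P$ and establishing the averagedness of $T$ with the \emph{sharp} threshold $\tau^{-1}-\rho^{-1}>L/2$ rather than a more conservative one, which requires careful handling, inside the preconditioned norm, of the cross terms produced by the forward step $-\tau\nabla\widehat f(y^k)$ and use of the cocoercivity bound exactly where it is tight. The reduction of the first step is elementary but must be done carefully because ${\cal S}$ may be a proper subspace of ${\cal Y}$ --- which is precisely why ADMM+ converges under weaker step-size conditions than the V\~u/Condat algorithm run on the original operator $M$: after the change of variable the effective operator is the identity (of norm $1$) and the relevant Lipschitz constant is $L$, that of $\nabla(f\circ M^{-1})$ on ${\cal S}$, rather than that of $\nabla f$ on all of ${\cal X}$. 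The duality argument and the appeal to the Krasnosel'skii--Mann theorem are then routine.
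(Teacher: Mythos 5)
Your proposal is correct and follows essentially the same route as the paper: the paper's proof likewise sets ${\cal E}={\cal S}$, works with $\bar f=f\circ M^{-1}$ and $\bar g=g\circ M^{-1}$, uses Moreau's identity and the change of variable $x^k=M^{-1}y^k$ to identify ADMM+ with the V\~u--Condat iteration~\eqref{vu-l}--\eqref{vu-y}, and then invokes Theorem~\ref{vu} (whose averaged-operator content is Lemma~\ref{lem:cv-fbpd}). Your second step merely makes explicit the averagedness computation that the paper delegates to the cited references.
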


\begin{remark}
  In the special case when $f=0$ (that is $L=0$), it turns out that 
the condition $\tau^{-1} - \rho^{-1} > L/2$ can be further weakened
to $\tau^{-1} - \rho^{-1} \geq 0$ (see~\cite{cond-13}). 
It is therefore possible to set $\tau=\rho$ and thus have a single instrumental parameter to tune in the algorithm.
Note also that in $f=0$ the algorithm is provably convergent with no need to require the injectivity of $M$.
\end{remark}

The proof of Theorem~\ref{the:cv} is provided in Appendix~\ref{sec:proofCV}. It is based on Theorem~\ref{vu} below.
For any function $g\in \Gamma_0({\cal X})$ we denote by $\prox_g$ its proximity operator defined by
\begin{equation}
\prox_g(x) = \arg\min_{w \in {\cal X}} \Bigl[ 
g(w)+\frac 12\|w-x\|^2 \Bigr] . 
\label{eq:prox}
\end{equation}

The ADMM+ is an instance of the primal dual algorithm recently 
proposed by V\~u\cite{vu2013splitting} and Condat~\cite{cond-13}, 
see also~\cite{he-yua-siam12}: 
\begin{theorem}[\!\!\cite{vu2013splitting,cond-13}] 
\label{vu} 
Given a Euclidean space $\cal E$, consider the minimization problem 
$\inf_{y\in{\cal E}} \bar f(y) + \bar g(y) + h(y)$ where 
$\bar g, h \in \Gamma_0(\cal E)$ and where $\bar f$ is convex and 
differentiable on $\cal E$ with an $L-$Lipschitz continuous gradient. 
Assume that the infimum is attained and that  
$0 \in \ri(\dom h - \dom \bar g)$. 
Let $\tau,\rho > 0$ be such that $\tau^{-1} - \rho^{-1} > L/2$, and consider 
the iterates 
\begin{subequations} 
\label{alg-vu} 
\begin{align}
\lambda^{k+1} &= \prox_{\rho^{-1} h^*}( \lambda^k + \rho^{-1} y^k) 
\label{vu-l} \\
y^{k+1} &= \prox_{\tau \bar g}( y^k - \tau\nabla \bar f(y^k) 
       - \tau (2\lambda^{k+1} -\lambda^k) ) . 
\label{vu-y} 
\end{align}
\end{subequations}  
Then for any initial value $(y^0, \lambda^0) \in {\cal E} \times {\cal E}$, 
the sequence $(y^k, \lambda^k)$ converges to a primal-dual point 
$(y^\star, \lambda^\star)$, \emph{i.e.}, a solution of the equation 
\begin{equation}
\label{pd-vu} 
\inf_{y\in{\cal E}} \bar f(y) + \bar g(y) + h(y) = 
- \inf_{\lambda \in\cal E} (\bar f + \bar g)^*(-\lambda) + h^*(\lambda).  
\end{equation} 
\end{theorem}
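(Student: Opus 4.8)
The plan is to recast the optimality conditions of~\eqref{pd-vu} as a monotone inclusion on the product space $\mathcal{Z} = \mathcal{E}\times\mathcal{E}$ and to recognize the iteration~\eqref{alg-vu} as a Krasnosel'skii-Mann iteration of an averaged operator, exactly the viewpoint announced in the introduction. Writing $\zeta = (y,\lambda)$, introduce on $\mathcal{Z}$ the operators $A\zeta = (\partial\bar g(y), \partial h^*(\lambda))$, the bounded linear skew-symmetric coupling $S\zeta = (\lambda, -y)$, and $C\zeta = (\nabla\bar f(y), 0)$. Here $A$ is maximally monotone as a product of subdifferentials, $S$ is monotone (being skew) with full domain, so $A+S$ is maximally monotone; and by the Baillon-Haddad theorem $\nabla\bar f$ is $(1/L)$-cocoercive, hence $C$ is cocoercive. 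A direct computation shows that $\zeta^\star=(y^\star,\lambda^\star)\in\zer(A+S+C)$ if and only if $0\in\nabla\bar f(y^\star)+\partial\bar g(y^\star)+\lambda^\star$ and $y^\star\in\partial h^*(\lambda^\star)$, i.e. $\zeta^\star$ solves the primal-dual system underlying~\eqref{pd-vu}; the assumed attainment of the infimum together with the qualification $0\in\ri(\dom h-\dom\bar g)$ guarantee, by the same Fenchel-Rockafellar argument used for~\eqref{duality}, that $\zer(A+S+C)\neq\emptyset$.

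Next I would exhibit the preconditioner $P = \begin{pmatrix} \tau^{-1}\Id & \Id \\ \Id & \rho\Id\end{pmatrix}$ on $\mathcal{Z}$, which is symmetric and, whenever $\tau^{-1}-\rho^{-1}>0$ (a fortiori under the hypothesis $\tau^{-1}-\rho^{-1}>L/2$), positive definite, since its Schur complement equals $\rho-\tau>0$. Eliminating the resolvents in~\eqref{vu-l} and~\eqref{vu-y} through their subdifferential characterizations, one verifies that one pass of the algorithm is exactly the inclusion $0\in P(\zeta^{k+1}-\zeta^k)+(A+S)\zeta^{k+1}+C\zeta^k$, that is
\begin{equation*}
\zeta^{k+1} = \bigl(\Id + P^{-1}(A+S)\bigr)^{-1}\bigl(\Id - P^{-1}C\bigr)\,\zeta^k =: T\zeta^k .
\end{equation*}
The off-diagonal unit blocks of $P$ are precisely what cancels the skew coupling $S$ and turns the backward step into the two sequential proximal updates on $h^*$ and $\bar g$; the extrapolation $2\lambda^{k+1}-\lambda^k$ in~\eqref{vu-y} and the use of $y^k$ (rather than $y^{k+1}$) in~\eqref{vu-l} are exactly the bookkeeping that makes this identification hold. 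Thus $T$ is a forward-backward operator in the metric induced by $\langle\cdot,\cdot\rangle_P = \langle P\,\cdot,\cdot\rangle$, and $\fix T=\zer(A+S+C)$.

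It then remains to show that $T$ is $\alpha$-averaged in the $P$-metric for some $\alpha\in(0,1)$. The resolvent $(\Id+P^{-1}(A+S))^{-1}$ is firmly nonexpansive for $\langle\cdot,\cdot\rangle_P$ because $P^{-1}(A+S)$ is maximally monotone with respect to that inner product. The forward factor $\Id-P^{-1}C$ is controlled through the cocoercivity of $C$: one checks that $P^{-1}C$ is cocoercive for $\langle\cdot,\cdot\rangle_P$ with constant strictly larger than $1/2$ precisely when $\tau^{-1}-\rho^{-1}>L/2$, which makes the composition $T$ averaged. Once averagedness is established, I would invoke the standard convergence theorem for Krasnosel'skii-Mann iterations of averaged operators with nonempty fixed point set~\cite[Section~5.2]{livre-combettes}: since $\mathcal{Z}$ is finite dimensional, this yields $\zeta^k=(y^k,\lambda^k)\to\zeta^\star\in\fix T=\zer(A+S+C)$, and unpacking the two inclusions shows $(y^\star,\lambda^\star)$ is a primal-dual point of~\eqref{pd-vu}.

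The main obstacle is this last averagedness estimate. Showing that the cocoercivity of $P^{-1}C$ survives the change of metric with a constant above the $1/2$ threshold amounts to a Schur-complement positivity computation in which the margin $\tau^{-1}-\rho^{-1}>L/2$ is consumed exactly to absorb the cocoercivity defect of the forward step; this is the only place where the strict step-size condition, rather than mere positive definiteness of $P$, is required, and it is the delicate part of the argument.
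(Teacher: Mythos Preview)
Your proposal is correct and follows essentially the same route the paper describes: the paper's Lemma~\ref{lem:cv-fbpd} (combined with Lemma~\ref{lem:krasno}) records exactly this averaged-operator interpretation, with your preconditioner $P$ coinciding with the paper's operator $\oV$ and the renormed space $\mathcal{H}_\oV$. The paper does not spell out the monotone-inclusion decomposition $A+S+C$ and the forward-backward reading (it defers to~\cite{vu2013splitting,cond-13}), but that is precisely the argument underlying those references and the value $\alpha=(2-\alpha_1)^{-1}$ with $\alpha_1=(L/2)(\tau^{-1}-\rho^{-1})^{-1}$ quoted in Lemma~\ref{lem:cv-fbpd}.
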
 

\subsection{The case $f\equiv 0$ and the link with ADMM}

In the special case $f\equiv0$ and $\tau=\rho$, sequence $(u^k)_{k\in \mathbb{N}}$ coincides with $(z^k)_{k\in \mathbb{N}}$.
Then, ADMM+ boils down to the standard ADMM whose iterations are given by:
    \begin{align*}
z^{k+1} &= \argmin_{z\in {\cal Y}}\left[ h(z)+
                \frac 1{2\rho}\|z-Mx^{k}-\rho\lambda^k\|^2 \right]\\
\lambda^{k+1} &= \lambda^k + \rho^{-1}(Mx^{k}-z^{k+1}) \\ 
x^{k+1} &= \argmin_{x\in {\cal X}}\left[ g(x)
    + \frac1{2\rho} \|Mx-z^{k+1}+\rho\lambda^{k+1}\|^2 \right] . 
\end{align*}


\subsection{The case $h\equiv 0$ and the link with the Forward-Backward 
algorithm}

In the special case $h\equiv 0$ and $M=I$, it can be easily verified that $\lambda^k$ is null for all $k\geq 1$
and $u^k=x^k$.
Then, ADMM+ boils down to the standard Forward-Backward algorithm whose iterations are given by:
\begin{align*}
x^{k+1} &= \argmin_{x\in {\cal X}} g(x)+ \frac1{2\tau} \| x-( x^{k} - \tau \nabla f(x^k)  )\|^2 \\
&= \prox_{\tau g} ( x^{k} - \tau \nabla f(x^k)).
\end{align*}
One can remark that $\rho$ has disappeared thus it can be set as large
as wanted so the condition on stepsize $\tau$ from
Theorem~\ref{the:cv} boils down to $\tau < 2/L$.  Applications of this
algorithm with particular functions appear in well known learning
methods such as ISTA \cite{daubechies2004}.

\subsection{Comparison to the original V\~u-Condat algorithm}  

We emphasize the fact that ADMM+ is a variation on the V\~u-Condat algorithm.
The original V\~u-Condat algorithm is in general sufficient and, in many contexts, 
has even better properties than ADMM+ from an implementation point of view.
Indeed, whereas the V\~u-Condat algorithm handles the operator $M$ explicitly,
the step (\ref{adm+-x}) in ADMM+ can be delicate to implement in certain applications,
\emph{i.e.}, when $M$ has no convenient structure allowing to easily compute the $\arg\min$
(the same remark holds of course for ADMM which is a special case of ADMM+).

This potential drawback is however not an issue in many other scenarios where the structure of $M$
is such that  step~(\ref{adm+-x}) is affordable. In Section~\ref{sec:distop},
we shall provide such scenarios where ADMM+ is especially relevant.
In particular, ADMM+ is not only easy to implement but it is also provably convergent under weaker assumptions on the step sizes,
as compared to the original V\~u-Condat algorithm.

Also, the injectivity assumption on $M$ could be seen as restrictive at first glance. 
First, the latter assumption is in fact not needed when $f=0$ as noted above.
Second, it is trivially satisfied in the application scenarios which motivate this paper (see the next sections).

\color{black}


{
As alluded to in the introduction, the primal-dual algorithm
of~\cite{vu2013splitting,cond-13} can be geometrically described as a sequence
of Krasnosel'skii-Mann iterations applied to an $\alpha$-averaged operator. In
the next section, we briefly present these notions, proceed by introducing the
randomized coordinate descent version of these iterations, then state our
convergence result.}  

\section{Coordinate Descent}
\label{sec:cd}

\subsection{Averaged operators and the primal-dual algorithm} 
\label{subsec-avg} 

Let $\cH$ be a Euclidean space\footnote{We refer to \cite{livre-combettes} for an extension to Hilbert spaces.}.
For $0<\alpha\leq 1$, a mapping $\oT:\cH\to\cH$  is 
\emph{$\alpha$-averaged} if the following inequality holds for any $x,y$ in $\cH$:
$$
\|\oT x-\oT y\|^2\leq \|x-y\|^2 - \frac{1-\alpha}\alpha \|(\oI-\oT)x-(\oI-\oT)y\|^2\,.
$$
A 1-averaged operator is said \emph{non-expansive}. A $\frac 12$-averaged operator
is said \emph{firmly non-expansive}. The following Lemma can be found in {\cite[Proposition 5.15, pp.80]{livre-combettes}.

\begin{lemma}[Krasnosel'skii-Mann iterations]
\label{lem:krasno} 
  Assume that $\oT:\cH\to\cH$ is $\alpha$-averaged and that the set $\fix(\oT)$ of fixed points of $\oT$ is non-empty.
Consider a sequence $(\eta_k)_{k\in \NN}$ such that $0\leq \eta_k\leq 1/\alpha$ and $\sum_k\eta_k(1/\alpha-\eta_k)=\infty$.
For any $x^0\in \cH$, the sequence $(x^k)_{k\in \NN}$ recursively defined on $\cH$ by $x^{k+1} = x^k + \eta_k(\oT x^k-x^k)$ converges to some point in $\fix(\oT)$. 
\end{lemma}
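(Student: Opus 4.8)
My approach is the classical one for Krasnosel'skii--Mann iterations, carried out in the Euclidean (finite-dimensional) setting assumed here. I would first fix an arbitrary $x^\star\in\fix(\oT)$ and establish the fundamental (Fej\'er-type) inequality. Expanding the recursion gives $\|x^{k+1}-x^\star\|^2=\|x^k-x^\star\|^2+2\eta_k\langle x^k-x^\star,\oT x^k-x^k\rangle+\eta_k^2\|\oT x^k-x^k\|^2$. Since $\oT x^\star=x^\star$, applying the $\alpha$-averagedness inequality to the pair $(x^k,x^\star)$ and comparing it with the expansion of $\|\oT x^k-x^\star\|^2=\|x^k-x^\star\|^2+2\langle x^k-x^\star,\oT x^k-x^k\rangle+\|\oT x^k-x^k\|^2$ yields $2\langle x^k-x^\star,\oT x^k-x^k\rangle\le-\alpha^{-1}\|\oT x^k-x^k\|^2$. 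Multiplying by $\eta_k\ge0$ and substituting, I obtain
\[
\|x^{k+1}-x^\star\|^2\le\|x^k-x^\star\|^2-\eta_k(\alpha^{-1}-\eta_k)\|\oT x^k-x^k\|^2 .
\]

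Because $0\le\eta_k\le1/\alpha$, the subtracted term is nonnegative, so $(\|x^k-x^\star\|)_{k\in\NN}$ is nonincreasing; hence it converges and $(x^k)_{k\in\NN}$ is bounded. Summing the displayed inequality telescopes to $\sum_k\eta_k(\alpha^{-1}-\eta_k)\|\oT x^k-x^k\|^2\le\|x^0-x^\star\|^2<\infty$. Combined with the hypothesis $\sum_k\eta_k(\alpha^{-1}-\eta_k)=\infty$, this forces $\liminf_k\|\oT x^k-x^k\|=0$ (otherwise, bounding $\|\oT x^k-x^k\|^2$ below by a positive constant for large $k$ would make the series diverge).

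To upgrade this to genuine convergence I would use compactness in $\cH$. An $\alpha$-averaged operator is nonexpansive, hence continuous. Choose a subsequence $(x^{k_j})_j$ along which $\oT x^{k_j}-x^{k_j}\to0$, and, using boundedness, extract a further subsequence converging to some $\bar x\in\cH$. Continuity of $\oT$ then gives $\oT\bar x-\bar x=\lim_j(\oT x^{k_j}-x^{k_j})=0$, i.e.\ $\bar x\in\fix(\oT)$. Applying the first step with $x^\star$ replaced by $\bar x$ shows that $(\|x^k-\bar x\|)_{k\in\NN}$ converges; since a subsequence of it tends to $0$, the whole sequence $(x^k)_{k\in\NN}$ converges to $\bar x\in\fix(\oT)$.

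The only delicate point is the last step, the passage from $\liminf_k\|\oT x^k-x^k\|=0$ to convergence of the iterates: in the Euclidean space considered here it is settled by the extraction argument above, whereas in a general Hilbert space one must instead invoke weak sequential compactness together with the demiclosedness of $\oI-\oT$ at $0$, which is why \cite{livre-combettes} treats that case separately.
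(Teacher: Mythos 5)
Your proof is correct. Note that the paper does not prove this lemma at all: it is quoted from \cite[Proposition~5.15]{livre-combettes}, so there is no in-paper argument to match. What you give is the classical Fej\'er-monotonicity proof that underlies that reference: the inequality $\|x^{k+1}-x^\star\|^2\le\|x^k-x^\star\|^2-\eta_k(\alpha^{-1}-\eta_k)\|\oT x^k-x^k\|^2$ is derived correctly from $\alpha$-averagedness, the summability/divergence interplay giving $\liminf_k\|\oT x^k-x^k\|=0$ is sound, and your final step correctly exploits that Fej\'er monotonicity holds with respect to \emph{every} point of $\fix(\oT)$, so a single cluster point in $\fix(\oT)$ forces convergence of the whole sequence (a $\liminf$ of the residual suffices; no monotonicity of $\|\oT x^k-x^k\|$ is needed). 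Your replacement of the Hilbert-space weak-compactness/demiclosedness step by a norm-compactness extraction is legitimate here, since the paper explicitly works in a Euclidean space $\cH$. It is also worth observing that your deterministic inequality is exactly the skeleton that the paper reuses, in conditional expectation, in its proof of Theorem~\ref{the:randomKM} (Appendix~B), so your argument is fully consistent with the paper's own techniques.
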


On the product space ${\cal Y}\times {\cal Y}$, consider the operator 
$$
  \oV =
  \begin{pmatrix}
    \tau^{-1} \oI_{\cal Y} & \oI_{\cal Y} \\
    \oI_{\cal Y} & \rho \oI_{\cal Y}
  \end{pmatrix}
$$
{\color{black} where $\oI_{\cal Y}$ stands for the identity on ${\cal Y}\to{\cal Y}$.}
When $\tau^{-1} - \rho^{-1} > 0$, one can easily check that $\oV$ is positive
definite. In this case, we endow ${\cal Y}\times {\cal Y}$ with an inner 
product $\langle \,\cdot\,,\,\cdot\,\rangle_{\oV}$ defined as $\langle
\zeta,\varphi\rangle_{\oV} = \langle \zeta,\oV\varphi\rangle$ where 
$\langle \,\cdot\,,\,\cdot\,\rangle$ stands for the natural inner product on 
${\cal Y}\times{\cal Y}$. We denote by ${\cal H}_\oV$ the corresponding 
Euclidean space. \\ 
In association with Lemma~\ref{lem:krasno}, the following lemma is at the 
heart of the proof of Theorem~\ref{vu}: 

\begin{lemma}[\!\!\cite{vu2013splitting,cond-13}] 
\label{lem:cv-fbpd}
Let Assumptions~\ref{cvx}--\ref{hyp:M2} hold true. Assume that 
$\tau^{-1} - \rho^{-1} > L/2$. Let 
$(\lambda^{k+1}, y^{k+1} ) = \oT(\lambda^{k}, y^{k})$ where $\oT$ is the 
transformation described by Equations~\eqref{vu-l}--\eqref{vu-y}. Then 
$\oT$ is an $\alpha$-averaged operator on ${\cal H}_\oV$ with 
$\alpha = (2-\alpha_1)^{-1}$ and 
$\alpha_1 = (L/2)(\tau^{-1} - \rho^{-1})^{-1}$. 
\end{lemma}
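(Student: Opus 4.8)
The plan is to recognize the V\~u--Condat iteration \eqref{vu-l}--\eqref{vu-y} as a forward-backward splitting applied to a monotone inclusion, then invoke the standard "cocoercivity $\Rightarrow$ averagedness" machinery in the metric induced by $\oV$. First I would write the primal-dual optimality system \eqref{pd-vu} as $0 \in \oA(\lambda,y) + \oB(\lambda,y)$, where $\oA$ collects the maximal monotone (subdifferential) parts, namely $(\lambda,y)\mapsto (\partial h^*(\lambda), \partial\bar g(y))$ together with the skew-symmetric coupling coming from the term $\langle \lambda, y\rangle$ implicit in the Lagrangian, and where $\oB(\lambda,y) = (0, \nabla\bar f(y))$ is the single-valued cocoercive part. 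The iteration \eqref{vu-l}--\eqref{vu-y} then has the form $\zeta^{k+1} = \oJ_{\oV^{-1}\oA}(\zeta^k - \oV^{-1}\oB\zeta^k)$, i.e. a resolvent of $\oA$ preconditioned by $\oV$ composed with a $\oV$-preconditioned forward step on $\oB$; this is exactly why the matrix $\oV$ is the natural object and why $\cH_\oV$ is the right space. Checking this reformulation is essentially the content of \cite{vu2013splitting,cond-13}, so I would state it and refer there rather than redo the bookkeeping, but I would verify at least that plugging the definitions of $\prox$ and $\oV$ into the resolvent/forward-step recovers \eqref{vu-l} then \eqref{vu-y} with the correct $2\lambda^{k+1}-\lambda^k$ over-relaxation term.

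The substantive estimate is the cocoercivity of the forward operator in the $\oV$-metric. By Assumption~\ref{hyp:M2}(ii), $\nabla\bar f$ is $L$-Lipschitz, hence by the Baillon--Haddad theorem it is $(1/L)$-cocoercive in the standard inner product on $\cal E$; I then need to transfer this to a cocoercivity constant with respect to $\langle\cdot,\cdot\rangle_\oV$. Since $\oB$ only acts on the $y$-block and the relevant quadratic form controlling the $y$-block of $\oV^{-1}$ is governed by $\tau^{-1}-\rho^{-1}$ (a short computation with the $2\times 2$ block inverse of $\oV$ shows the $y$-block of $\oV^{-1}$ equals $(\tau^{-1}-\rho^{-1})^{-1}\oI_{\cal Y}$ up to the coupling), one obtains that $\oV^{-1}\oB$ is $\beta$-cocoercive in $\cH_\oV$ with $\beta = 2(\tau^{-1}-\rho^{-1})/L = 1/\alpha_1$. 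This is the step I expect to be the main obstacle: keeping track of the preconditioner carefully enough to land exactly on $\alpha_1 = (L/2)(\tau^{-1}-\rho^{-1})^{-1}$ rather than an off-by-a-constant version, and making sure the skew-symmetric coupling in $\oA$ (which contributes nothing to monotone-operator norms but does interact with the metric) is handled correctly.

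Once $\beta$-cocoercivity of the forward operator in $\cH_\oV$ is in hand, I finish with the textbook fact \cite[Proposition 4.39 / Section 25.3]{livre-combettes}: if $\oB'$ is $\beta$-cocoercive and $\oJ$ is the resolvent of a maximal monotone operator (hence firmly nonexpansive, i.e. $1/2$-averaged), then for any step length $\gamma \in (0,2\beta)$ the composition $\oJ\circ(\oI-\gamma\oB')$ is $\alpha$-averaged with $\alpha = (2 - \gamma/(2\beta))^{-1} = 1/(2-\alpha_1)$, where here the implicit step length is $\gamma = 1$ (the forward step is $\zeta^k - \oV^{-1}\oB\zeta^k$) and $\gamma/(2\beta) = 1/(2\beta) = \alpha_1$. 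The hypothesis $\tau^{-1}-\rho^{-1} > L/2$ is precisely what guarantees $\alpha_1 \in (0,1)$, so that $2-\alpha_1 \in (1,2)$ and $\alpha = (2-\alpha_1)^{-1} \in (1/2,1)$ is a legitimate averagedness constant, and the lemma follows. I would also note that for $L=0$ (the $f=0$ case of the remark) one gets $\alpha_1 = 0$ and $\alpha = 1/2$, i.e. $\oT$ is firmly nonexpansive, consistent with the weaker step-size condition mentioned after Theorem~\ref{the:cv}.
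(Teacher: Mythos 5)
Your route is the standard one: the paper itself gives no proof of this lemma (it is quoted from V\~u and Condat), and the argument you outline --- rewrite \eqref{vu-l}--\eqref{vu-y} as $\zeta^{k+1}=(\oV+\oA)^{-1}(\oV-\oB)\zeta^k$, i.e.\ a forward--backward step in ${\cal H}_\oV$ with $\oA$ maximally monotone (subdifferentials plus the skew coupling) and $\oB(\lambda,y)=(0,\nabla\bar f(y))$ cocoercive, then apply the averagedness rule for ``resolvent $\circ$ forward step'' --- is exactly the argument of the cited references, so the structure is right and the use of the hypothesis $\tau^{-1}-\rho^{-1}>L/2$ (as $\gamma=1<2\beta$) is placed correctly.

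There is, however, a factor-of-two inconsistency at the step you yourself identified as the delicate one. The $y$-block of $\oV^{-1}$ is the inverse Schur complement $(\tau^{-1}-\rho^{-1})^{-1}\oI_{\cal Y}$ exactly (provided the $\tau^{-1}$ diagonal block of $\oV$ is the one associated with the $y$-variable, which is the convention needed for the lemma's constant), and since $\oB$ is supported on the $y$-block, Baillon--Haddad gives
$\langle \oV^{-1}\oB\zeta-\oV^{-1}\oB\zeta',\zeta-\zeta'\rangle_{\oV}
=\langle \nabla\bar f(y)-\nabla\bar f(y'),y-y'\rangle
\ge \tfrac1L\|\nabla\bar f(y)-\nabla\bar f(y')\|^2
=\tfrac{\tau^{-1}-\rho^{-1}}{L}\,\|\oV^{-1}\oB\zeta-\oV^{-1}\oB\zeta'\|_{\oV}^2$,
so the cocoercivity constant is $\beta=(\tau^{-1}-\rho^{-1})/L=1/(2\alpha_1)$, not $2(\tau^{-1}-\rho^{-1})/L=1/\alpha_1$ as you state. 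Your subsequent line ``$\gamma/(2\beta)=1/(2\beta)=\alpha_1$'' is only true with the corrected $\beta$, so as written the two halves of your argument contradict each other even though the final formula $\alpha=(2-\alpha_1)^{-1}$ comes out right. Note also that with your value of $\beta$ the admissibility condition $1<2\beta$ would only require $\tau^{-1}-\rho^{-1}>L/4$, which does not match the lemma's hypothesis --- a useful sanity check that the corrected constant is the right one, since $1<2\beta$ with $\beta=(\tau^{-1}-\rho^{-1})/L$ is precisely $\tau^{-1}-\rho^{-1}>L/2$. With that repair (and the promised verification that the resolvent in the $\oV$-metric does reproduce the Gauss--Seidel order and the $2\lambda^{k+1}-\lambda^k$ term), the proof is complete and coincides with the one in the cited works.
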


Note that $\tau^{-1} - \rho^{-1} > L/2$ implies that $1 > \alpha_1 \geq 0$ and thus that  $\alpha$ verifies $\frac 12 \leq \alpha < 1$ which matches the definition of $\alpha$-averaged operators.

\subsection{Randomized Krasnosel'skii Mann Iterations}
\label{sec:randomKM} 
Consider the space $\cH=\cH_1\times\dots\times\cH_J$ for some 
integer $J\geq 1$ where for any $j$, $\cH_j$ is a Euclidean space. 
Assume that $\cH$ is equipped with the scalar product 
$\langle x,y\rangle = \sum_{j=1}^J \langle x_j,y_j\rangle_{\cH_j}$
where $\langle\,\cdot\,,\,\cdot\,\rangle_{\cH_j}$ is the scalar product in $\cH_j$.
For $j\in\{1,\dots,J\}$, we denote by $\oT_j:\cH\to\cH_j$ the components of the output of operator $\oT:\cH\to\cH$ corresponding to $\cH_j$, we thus have
$\oT x = (\oT_1 x,\dots,\oT_J x)$. 
We denote by $2^{\cal J}$ the power set of ${\cal J}=\{1,\dots,J\}$.
For any $\kappa\in 2^{\cal J}$,
we define the operator $\hat\oT^{(\kappa)}:\cH\to\cH$ by
$\hat \oT^{(\kappa)}_j x = \oT_j x$ if $j\in \kappa$ and 
$\hat \oT^{(\kappa)}_j x =x_j$ otherwise.
On some probability space $(\Omega,{\mathcal F}, {\mathbb P})$, we introduce a
random i.i.d. sequence $(\xi^k)_{k\in\mathbb{N}^*}$ such that $\xi^k : \Omega
\to 2^{\cal J}$ \emph{i.e.} $\xi^k(\omega)$ is a subset of $\cal J$.  We assume
that the following holds:
\begin{equation}
  \label{eq:2}
  \forall j\in{\cal J},\,\exists \kappa\in 2^{\cal J} \text{ s.t.}\, j\in\kappa\text{ and }{\mathbb P}(\xi_1 = \kappa) >0\,. 
\end{equation}

Let  $\oT$ be an $\alpha$-averaged operator, instead of considering the iterates 
$x^{k+1} = x^k + \eta_k(\oT x^k - x^k)$, 
we are now interested in a stochastic \emph{coordinate descent} version 
of this algorithm that consists in iterates of the type 
$x^{k+1} = x^k + \eta_k(\hat\oT^{(\xi^{k+1})} x^k - x^k)$. The proof of Theorem~\ref{the:randomKM}
is provided in Appendix~\ref{sec:proofrandomKM}.

\begin{theorem}
\label{the:randomKM}
  Let $\oT:\cH\to\cH$ be $\alpha$-averaged and $\fix(\oT)\neq\emptyset$.
Assume that for all $k$, the sequence $(\eta_k)_{k\in \NN}$ satisfies
$$
0<\liminf_k\eta_k\leq \limsup_k\eta_k<\frac 1\alpha\,.
$$
Let $(\xi^k)_{k\in \NN^*}$ be a random i.i.d. sequence on $2^{\cal J}$ such 
that Condition~\eqref{eq:2} holds. Then, for any deterministic initial value
$x_0$, the iterated sequence
\begin{equation}
x^{k+1} = x^k + \eta_k(\hat\oT^{(\xi^{k+1})} x^k-x^k)\label{eq:3}
\end{equation}
converges almost surely to a random variable supported by $\fix(T)$.
\end{theorem}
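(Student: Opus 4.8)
The plan is to mimic, at the level of the squared distance to a fixed point, the Fej\'er monotone mechanism behind ordinary Krasnosel'skii--Mann iterations, but in conditional expectation and in a norm reweighted by the coordinate activation probabilities, so that one randomly activated partial update becomes, on average, a genuine relaxed Krasnosel'skii--Mann step; a stochastic version of Opial's lemma then turns the resulting almost-sure decrease into almost-sure convergence of the whole sequence $(x^k)$.

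First I would fix the probabilistic bookkeeping. Let $\mathcal{F}^k$ be the $\sigma$-algebra generated by $\xi^1,\dots,\xi^k$, so that $x^k$ is $\mathcal{F}^k$-measurable and $\xi^{k+1}$ is independent of $\mathcal{F}^k$, and set $p_j \eqdef \mathbb{P}(j\in\xi^1)$, which is strictly positive for every $j\in{\cal J}$ by Condition~\eqref{eq:2}. On $\cH$ introduce the equivalent norm $\leftnorm x\rightnorm^2 \eqdef \sum_{j=1}^J p_j^{-1}\|x_j\|^2$. Fixing $x^\star\in\fix(\oT)$ and writing $\bar x^{k+1}\eqdef x^k+\eta_k(\oT x^k-x^k)$ for the non-randomized relaxed step, one has blockwise $x^{k+1}_j = x^k_j + \indic\{j\in\xi^{k+1}\}\,\eta_k(\oT_j x^k-x^k_j)$, with the indicator independent of $\mathcal{F}^k$ and of mean $p_j$; expanding the square yields $\EE[\|x^{k+1}_j-x^\star_j\|^2\mid\mathcal{F}^k] = (1-p_j)\|x^k_j-x^\star_j\|^2 + p_j\|\bar x^{k+1}_j-x^\star_j\|^2$. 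Multiplying by $p_j^{-1}$ and summing over $j$, the weighted and unweighted contributions recombine into the identity
\[
\EE\bigl[\leftnorm x^{k+1}-x^\star\rightnorm^2\mid\mathcal{F}^k\bigr] = \leftnorm x^k-x^\star\rightnorm^2 + \|\bar x^{k+1}-x^\star\|^2 - \|x^k-x^\star\|^2 .
\]

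Next I would invoke the classical estimate for a relaxed iteration of an $\alpha$-averaged operator: applying the averagedness inequality at $x^k$ and the fixed point $x^\star$, then using $\|(1-\eta)a+\eta b\|^2=(1-\eta)\|a\|^2+\eta\|b\|^2-\eta(1-\eta)\|a-b\|^2$, one gets $\|\bar x^{k+1}-x^\star\|^2\leq\|x^k-x^\star\|^2-\eta_k(\alpha^{-1}-\eta_k)\|\oT x^k-x^k\|^2$. Substituting this into the identity above gives
\[
\EE\bigl[\leftnorm x^{k+1}-x^\star\rightnorm^2\mid\mathcal{F}^k\bigr] \leq \leftnorm x^k-x^\star\rightnorm^2 - \eta_k(\alpha^{-1}-\eta_k)\|\oT x^k-x^k\|^2 .
\]
Since $0<\liminf_k\eta_k\leq\limsup_k\eta_k<\alpha^{-1}$, the factor $\eta_k(\alpha^{-1}-\eta_k)$ is bounded below by a positive constant for all large $k$, so the Robbins--Siegmund lemma (here, just convergence of nonnegative supermartingales) applied to $V_k\eqdef\leftnorm x^k-x^\star\rightnorm^2$ yields, almost surely, that $V_k$ converges and $\sum_k\|\oT x^k-x^k\|^2<\infty$, in particular $\oT x^k-x^k\to 0$ almost surely.

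It remains to pass to convergence of $(x^k)$ itself. I would use that $\fix(\oT)$ is nonempty, closed and convex ($\oT$ being nonexpansive on a Euclidean space), choose a countable dense subset $D\subset\fix(\oT)$, and intersect over $x^\star\in D$ the probability-one events obtained above; density of $D$ and equivalence of $\leftnorm\cdot\rightnorm$ with the Euclidean norm then produce a single event of probability one on which $(x^k)$ is bounded, $\oT x^k-x^k\to 0$, and $\leftnorm x^k-x^\star\rightnorm$ converges for \emph{every} $x^\star\in\fix(\oT)$. On that event, any cluster point $\bar x$ of $(x^k)$ satisfies $\oT\bar x=\bar x$ (continuity of $\oT$ together with $\oT x^k-x^k\to 0$), and then choosing $x^\star=\bar x$ forces $\leftnorm x^k-\bar x\rightnorm\to 0$, since this convergent nonnegative sequence has a subsequence tending to $0$; hence the whole sequence converges to $\bar x\in\fix(\oT)$, and this limit is a genuine random variable as an almost-sure limit of measurable maps. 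The conditional-expectation identity and the averaged-operator estimate are routine computations; the step that needs the most care is this last one, because the supermartingale inequality holds only for one prescribed $x^\star$ at a time, so one must argue carefully that the exceptional null sets can be aggregated over a dense subset of $\fix(\oT)$ and that the resulting random limit is measurable.
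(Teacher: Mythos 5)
Your proof is correct and follows essentially the same route as the paper's: the same probability-reweighted norm $\leftnorm x\rightnorm^2=\sum_j p_j^{-1}\|x_j\|^2$, the same conditional-expectation identity reducing the randomized step to the deterministic relaxed Krasnosel'skii--Mann step, the same supermartingale argument, and the same aggregation over a countable dense subset of $\fix(\oT)$ followed by an Opial-type conclusion. The only (immaterial) differences are that you work blockwise with activation indicators rather than summing over subsets $\kappa\in 2^{\cal J}$, and you obtain $\sum_k\|\oT x^k-x^k\|^2<\infty$ almost surely directly from Robbins--Siegmund, whereas the paper takes expectations and applies Markov plus Borel--Cantelli.
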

\begin{remark}
At the time of the writing the paper, the work \cite{pesquet-stochastic} was
brought to our knowledge.  A result similar to Theorem~\ref{the:randomKM} is
presented {\color{black} in the framework of Hilbert spaces, 
random summable errors (dealt with by relying on the
notion of quasi-F\'ejer monotonicity) and multiple blocks.}  
{The proof of~\cite{pesquet-stochastic} devoted to this result
relies on the same idea as the one developed in~\cite{iut-cdc13} and presented 
above. Distributed asynchronous implementations are not considered 
in~\cite{pesquet-stochastic}}.
\end{remark}


By Lemma~\ref{lem:cv-fbpd}, ADMM+ iterates are generated by the action of
an $\alpha$-averaged operator. Theorem~\ref{the:randomKM} shows then that a 
stochastic coordinate descent version of {any $\alpha$-averaged operator
 converges towards a primal-dual point. In Theorem~\ref{th:dapd} below, we apply 
this result to the operator related to ADMM+, 
and develop an asynchronous version of ADMM+ in the context where it is distributed on a graph.}

\section{Distributed Optimization}
\label{sec:distop}

Consider a set of $N > 1$ computing agents that cooperate to solve 
the minimization problem
\begin{equation}
\inf_{x\in \cal X} \sum_{n=1}^N (f_n(x) + g_n(x))\label{eq:pb-distrib}
\end{equation}
where $f_n$ and $g_n$ are two private functions available at Agent $n$. 
We make here the following assumption: 
\begin{assumption}
\label{hyp:pb-dist}
For each $n = 1,...,N$,
  \begin{enumerate}[(i)]
 \item $f_n$ is a convex differentiable function on ${\cal X}$, and its gradient 
       $\nabla f_n$ is $\bar L$-Lipschitz continuous on ${\cal X}$ for some $\bar L\geq 0$.
 \item $g_n\in \Gamma_0({\cal X})$.
\item The infimum of Problem (\ref{eq:pb-distrib}) is attained.
\item $\cap_{n=1}^N \ri \dom g_n\neq \emptyset$.
  \end{enumerate}
\end{assumption}
Our purpose is to design a random distributed (or decentralized) iterative
algorithm where, at a each iteration, each active agent updates a local
estimate in the parameter space ${\cal X}$ based on the sole knowledge of its
private functions and on information it received from its neighbors through
some communication network. Eventually, the local estimates will converge to a
common consensus value which is a minimizer of the aggregate function of
Problem~(\ref{eq:pb-distrib}).

Instances of this problem appear in learning applications
where massive training data sets are distributed over a network and processed 
by distinct machines \cite{for-can-gia-jmlr10,agarwal2011reliable}, 
in resource allocation problems for communication networks 
\cite{bia-jak-TAC13}, or in statistical estimation problems by
sensor networks \cite{ram-vee-ned-tac10,bia-for-hac-IT13}. 

\subsection{Network Model and Problem Formulation}

{
To help the reader, the notations that will be introduced progressively are
summarized in the following table. 
\begin{center}
\begin{tabular}{|lcl|}  
\hline
$d_n$ &:& Degree of node (agent) $n$, \\ 
$\epsilon = \{n,m\}$ &:& Graph edge between $n$ and $m$, \\ 
$E$ &:& Set of graph edges, \\
$f(x) = \sum f_n(x_n)$ &:& Differentiable term in obj.~fct., \\ 
$g(x) = \sum g_n(x_n)$ &:& Other $\Gamma_0$ term in obj.~fct., \\ 
$h$ &:& Consensus ensuring function, \\ 
$\lambda^k$ &:& $= ( (\lambda_\epsilon^k(n), \lambda_\epsilon^k(m) ) 
               )_{\epsilon = \{n,m\}\in E}$ \\ 
& & ${\mathcal X}^{2|E|}$ vector of dual variables, \\
& & $\lambda_\epsilon^k(n)$ is updated by Agent $n$, \\ 
$n \sim m$ &:& Stands for $\{n,m\} \in E$, \\  
Subscript $n$ &:& Agent number, \\
Superscript $k$ &:& Time index, \\ 
$V = \{1,\ldots, N\}$ &:& Set of graph nodes (agents), \\ 
$x^k = (x^k_n)_{n\in V}$ &:& ${\mathcal X}^{N}$ vector of primal 
                            variables, \\  
& & updated by Eq.~\eqref{adm+-x},  \\ 
$z^k = ( (\bar z_\epsilon^k, \bar z_\epsilon^k ) )_{\epsilon \in E}$ &:& 
${\mathcal X}^{2|E|}$ vector given by Eq.~\eqref{adm+-z}. \\ 
\hline
\end{tabular} 
\end{center} 
} 
We represent the network as an undirected graph $G = (V,E)$ where 
$V=\{1,\dots,N\}$ is the set of agents/nodes and $E$ is the set of edges. 
Representing an edge by a set $\{n, m\}$ with $n,m \in V$, we 
write $m\sim n$ whenever $\{n,m\}\in E$.  Practically, $n\sim m$
means that Agents $n$ and $m$ can communicate with each other. 
\begin{assumption}\label{hyp:connected}
  $G$ is connected and has no self-loops ($n\neq m$ for all $\{ n,m\} \in E$). 
\end{assumption}

Let us introduce some notation.  For any $x\in {\cal X}^{N}$, we
denote by $x_n$ the $n^\textrm{th}$ component of $x$, \emph{i.e.}, $x= (x_n)_{n\in {V}}$.
We introduce the functions $f$ and $g$ on ${\cal X}^{N}\to
(-\infty,+\infty]$ as 
$f(x) = \sum_{n\in V}f_n(x_n)$ and $g(x) = \sum_{n\in V}g_n(x_n)$. 
Clearly, Problem~\eqref{eq:pb-distrib} is equivalent to the minimization of
$f(x)+g(x)$ under the constraint that all components of $x$ are equal. 
\color{black}
Here, one can rephrase the optimization problem~\eqref{eq:pb-distrib} as 
\[
\min_{x\in {\cal X}^N} \sum_{n=1}^N (f_n(x_n) + g_n(x_n)) + \iota_{\cal C}(x)
\] 
where $\iota_A$ is the indicator function of a set $A$ (null on
$A$ and equal to $+\infty$ outside this set), and $\cal C$ is the space of
vectors $x\in {\cal X}^N$ such that $x_1=\cdots=x_N$.
This problem is an instance of Problem~\eqref{pb} where $h=\iota_{\cal C}$ and 
$M$ as the identity operator. 
However, simply setting $h=\iota_{\cal C}$ and $M$ as the identity would not lead 
to a distributed algorithm. Loosely speaking, we must define $h$ and $M$ in such a 
way that it encodes the communication graph. \color{black}
Our goal will be to ensure global consensus through local consensus over every 
edge of the graph.

For any $\epsilon\in E$, say $\epsilon= \{n,m\}$, we define 
the linear operator $M_\epsilon:{\cal X}^{N}\to {\cal X}^2$
as $M_\epsilon (x) = (x_n,x_{m})$ assuming $n < m$ to avoid any
ambiguity on the definition of $M$. We construct the linear operator 
$M:{\cal X}^{N}\to {\cal Y}\triangleq {\cal X}^{2|E|}$ as
$Mx = \left(M_\epsilon(x)\right)_{\epsilon\in E}$ where we assume some 
(irrelevant) ordering on the edges. Any vector 
$y \in {\cal Y}$ will be written as $y=(y_\epsilon)_{\epsilon\in E}$ where, 
writing 
$\epsilon= \{n,m\}\in E$, the component $y_\epsilon$ will be represented 
by the couple $y_\epsilon = (y_{\epsilon}(n),y_{\epsilon}(m))$ with $n < m$. 
Note that this notation is abusive since it tends to indicate that
$y_\epsilon$ has more than two components.  
However, it will turn out to be convenient in the sequel. 
We also introduce the subspace of ${\cal X}^2$ defined as 
${\cal C}_2=\{(x,x) : x\in {\cal X}\}$. Finally, we define 
$h:{\cal Y}\to \mathbb (-\infty,+\infty]$ as 
\begin{equation}
\label{def:h} 
h(y) = \sum_{\epsilon\in E} \iota_{{\cal C}_2}(y_\epsilon)\,.
\end{equation} 
We consider the following problem:
\begin{equation}
  \label{eq:pb-od}
  \min_{x\in {\cal X}^{N}} f(x)+g(x)+h(Mx)\ .
\end{equation}
\begin{lemma}
\label{consensus} 
Let Assumption \ref{hyp:connected} hold true. 
The minimizers of~\eqref{eq:pb-od} are the tuples $(x^\star,\cdots,x^\star)$
where $x^\star$ is any minimizer of~\eqref{eq:pb-distrib}.
\end{lemma}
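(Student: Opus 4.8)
The plan is to show that the effective domain of the objective in \eqref{eq:pb-od} is exactly the consensus subspace ${\cal C}=\{(u,\dots,u):u\in{\cal X}\}$, and that on this subspace the objective reduces to the aggregate cost of \eqref{eq:pb-distrib}; the stated correspondence of minimizers then follows at once.

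First I would unfold the definition \eqref{def:h}: since $h(Mx)=\sum_{\epsilon\in E}\iota_{{\cal C}_2}(M_\epsilon(x))$ is a sum of values in $\{0,+\infty\}$, it is finite if and only if $M_\epsilon(x)\in{\cal C}_2$ for every edge $\epsilon$, and because $M_\epsilon(x)=(x_n,x_m)$ for $\epsilon=\{n,m\}$, this is exactly the requirement $x_n=x_m$ for all $\{n,m\}\in E$. Passing from these pairwise equalities along edges to the global equality $x_1=\dots=x_N$ is where connectedness enters: the constraints force $x$ to be constant on each connected component of $G$, and Assumption~\ref{hyp:connected} guarantees a single component. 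Hence $\{x\in{\cal X}^N:h(Mx)<\infty\}={\cal C}$. (This step, trivial as it is, is the only genuine use of the hypothesis: without connectedness the lemma is false.)

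Next I would note that for $x=(u,\dots,u)\in{\cal C}$ one has $h(Mx)=0$ and $f(x)+g(x)=\sum_{n=1}^N\bigl(f_n(u)+g_n(u)\bigr)$ by the definitions of $f$ and $g$. Thus the restriction of the objective of \eqref{eq:pb-od} to its domain is the map $(u,\dots,u)\mapsto\sum_{n=1}^N\bigl(f_n(u)+g_n(u)\bigr)$, which, through the bijection $u\leftrightarrow(u,\dots,u)$ between ${\cal X}$ and ${\cal C}$, has the same infimum and the same set of minimizers as the objective of \eqref{eq:pb-distrib}. Therefore $x$ is a minimizer of \eqref{eq:pb-od} if and only if $x=(x^\star,\dots,x^\star)$ for some minimizer $x^\star$ of \eqref{eq:pb-distrib}, and this set is nonempty by Assumption~\ref{hyp:pb-dist}(iii). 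No substantive obstacle is expected; the argument is essentially a bookkeeping verification, with the lone subtlety being the edge-to-global propagation of the equality constraints via connectedness.
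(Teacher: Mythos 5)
Your argument is correct and follows essentially the same route as the paper's proof: the indicator structure of $h\circ M$ forces equality along every edge, connectedness of $G$ upgrades this to global consensus, and on the consensus set the objective of~\eqref{eq:pb-od} reduces to that of~\eqref{eq:pb-distrib}. Your write-up is merely a little more explicit than the paper's (spelling out the bijection $u\leftrightarrow(u,\dots,u)$ and the nonemptiness of the minimizer set), but there is no substantive difference.
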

\begin{proof} Assume that Problem~\eqref{eq:pb-od} has a minimizer 
$\underline x = ( x_1, \ldots, x_{N})$. Then 
\[
h(M\underline x) = 
\sum_{\epsilon = \{n,m\}\in E} \iota_{{\cal C}_2}( ( x_n, x_m ))  = 0 . 
\]
Since the graph $G$ is connected, this equation is satisfied if and only if
$\underline x = (x^\star, \ldots, x^\star)$ for some $x^\star \in \cal X$. 
The result follows. 
\end{proof}

\subsection{Instantiating ADMM+}

We now apply ADMM+ to solve the problem~\eqref{eq:pb-od}. 
Since the newly defined function $h$ is separable with respect to the 
$(y_\epsilon)_{\epsilon\in E}$, we get 
\[
\prox_{\rho h}(y) = 
( \prox_{\rho \iota_{{\cal C}_2}}(y_\epsilon) )_{\epsilon \in E} 
= 
\Bigl( (\bar y_{\epsilon},\bar y_{\epsilon}) 
                     \Bigr)_{\epsilon \in E}
\]
where $\bar y_{\epsilon} = (y_{\epsilon}(n)+y_{\epsilon}(m))/2$ if $\epsilon = \{n,m\} $. 
With this at hand, the update equation~\eqref{adm+-z} of ADMM+ is
written as $z^{k+1} = 
( (\bar z^{k+1}_{\epsilon},\bar z^{k+1}_{\epsilon}) )_{\epsilon \in E}$ 
where $\bar z^{k+1}_{\epsilon} = 
(x_n^{k} + x_m^{k})/2 + 
    \rho(\lambda^k_{\epsilon}(n) + \lambda^k_{\epsilon}(m))/2$
for any $\epsilon= \{n,m\}\in E$.
Plugging this equality into Eq.~\eqref{adm+-l}, it can be seen that 
$\lambda^k_{\epsilon}(n) = -\lambda^k_{\epsilon}(m)$.
Therefore, $\bar z_\epsilon^{k+1}=(x_n^{k} + x_m^{k})/2$ for any $k\geq 1$.
Moreover, $\lambda^{k+1}_{\epsilon}(n) = \lambda^k_{\epsilon}(n)+
(x_n^{k} - x_m^{k})/(2\rho)$. \\ 
Let us now instantiate Equations~\eqref{adm+-u} and~\eqref{adm+-x}. 
Observe that the $n^\textrm{th}$ component of the vector $M^* M x$ coincides 
with $d_n x_n$ where $d_n$ is the degree (\emph{i.e.}, the number of neighbors)
of node~$n$. 
From Eq.~\eqref{adm+-x}, the $n^{\text{th}}$ component of $x^{k+1}$ is written 
\[
x^{k+1}_n = \prox_{\tau g_n/d_n}\Bigl[ 
 \frac{(M^* (u^{k+1} - \tau \lambda^{k+1}))_n  
   - \tau\nabla f_n(x^k_n)}{d_n} \Bigr]
\]
where for any $y \in \cal Y$, 
\[
(M^*y)_n = \sum_{m : \{n,m\} \in E} y_{\{n,m\}}(n)
\]
is the $n^\textrm{th}$ component of $M^*y \in {\cal X}^{N}$.  
Plugging Eq.~\eqref{adm+-u} together with the expressions of 
$\bar z_{\{n,m\}}^{k+1}$ and $\lambda^{k+1}_{\{n,m\}}(n)$ in the argument of 
$\prox_{\tau g_n/d_n}$, we get after a small calculation 
\begin{multline*} 
x^{k+1}_n = \prox_{\tau g_n/d_n}\Bigl[ (1 - \tau\rho^{-1}) x_n^k 
- \frac{\tau}{d_n} \nabla f_n(x_n^k)  \\
 + \frac{\tau}{d_n} \sum_{m:\{n,m\} \in E} (\rho^{-1} x_m^k 
- \lambda_{\{n,m\}}^k(n)) \Bigr] .
\end{multline*}
{The \emph{Distributed ADMM+} (DADMM+) algorithm is described by the 
following procedure:} 
\smallskip
\begin{breakbox}
\noindent {\bf DADMM+} \\
Initialization: $(x^0,\lambda^0)$ s.t. $\lambda^0_{\{n,m\}}(n) = 
- \lambda^0_{\{n,m\}}(m)$ for all $m\sim n$. \\
Do 
\begin{itemize} 
\item For all $n \in V$, Agent $n$ {has in its memory the variables
$x_n^k$, $\{\lambda_{\{n,m\}}^k(n) \}_{m\sim n}$, and $\{ x^k_m \}_{m\sim n}$.
It} performs the following operations: 
\begin{itemize} 
\item For all $m\sim n$, do 
\[
\lambda^{k+1}_{\{n,m\}}(n) = \lambda^k_{\{n,m\}}(n) 
       + \frac{x_n^{k} - x_m^{k}}{2\rho} ,  
\]
\item 
$\displaystyle{
x^{k+1}_n = \prox_{\tau g_n/d_n}\Bigl[ (1 - \tau\rho^{-1}) x_n^k 
- \frac{\tau}{d_n} \nabla f_n(x_n^k)}$ \\ 
$\displaystyle{\ \ \ \ \ \ \ \ \ \ 
 + \ \frac{\tau}{d_n} \sum_{m:\{n,m\} \in E} (\rho^{-1} x_m^k 
- \lambda_{\{n,m\}}^k(n)) \Bigr]}$. 
\end{itemize} 
\item For all $n \in V$, Agent~$n$ sends the parameter $x^{k+1}_n$ to its 
neighbors, 
\item Increment $k$.  
\end{itemize}
\end{breakbox}
\smallskip


\medskip

The proof of the following result is provided in Appendix~\ref{app:proof-DADMM+}.
\begin{theorem}
\label{the:DADMM+}
Let Assumptions~\ref{hyp:pb-dist} and \ref{hyp:connected} hold true.
Assume that {\color{black}
  \begin{equation}
  \tau^{-1}-\rho^{-1} > \frac{\bar L}{2d_{\mathrm{min}}}
\label{eq:cond-pas-DADMM+}
\end{equation}
where $d_{\mathrm{min}}$ is the minimum of the nodes' degrees in the graph $G$.}
For any initial value $(x^0,\lambda^0)$, let $(x^{k})_{k \in  \mathbb{N}}$ be 
the sequence produced by the Distributed ADMM+. 
Then there exists a minimizer $x^\star$ of Problem~\eqref{eq:pb-distrib} such 
that for all $n\in V$, $(x_n^k)_{k\in \mathbb{N}}$ converges to $x^\star$.
\end{theorem}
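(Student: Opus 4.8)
The plan is to obtain Theorem~\ref{the:DADMM+} as a direct consequence of Theorem~\ref{the:cv} applied to the particular instance of Problem~\eqref{pb} given by~\eqref{eq:pb-od}, after verifying that the abstract hypotheses Assumptions~\ref{cvx}--\ref{qualif} are satisfied for this instance, that the step-size condition $\tau^{-1}-\rho^{-1}>L/2$ of Theorem~\ref{the:cv} translates into~\eqref{eq:cond-pas-DADMM+}, and that the iterates of DADMM+ are exactly the ADMM+ iterates instantiated on this problem. The last point is essentially already done in the main text: the paragraph preceding the algorithm box derives the closed form of $z^{k+1}$, shows that $\lambda^k_\epsilon(n)=-\lambda^k_\epsilon(m)$ for $k\ge 1$ (which is why the initialization in the box is chosen to enforce this already at $k=0$), and reduces~\eqref{adm+-x} to the displayed $\prox_{\tau g_n/d_n}$ update. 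So the proof only has to record that DADMM+ is ADMM+ for $(f,g,h,M)$ as defined in~\eqref{def:h} and the surrounding paragraph.

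First I would check Assumptions~\ref{cvx}--\ref{qualif}. Assumption~\ref{cvx} is immediate: $f(x)=\sum_n f_n(x_n)$ is convex and differentiable by Assumption~\ref{hyp:pb-dist}(i), and $g=\sum_n g_n(x_n)\in\Gamma_0({\cal X}^N)$, $h=\sum_\epsilon \iota_{{\cal C}_2}(y_\epsilon)\in\Gamma_0({\cal Y})$ by Assumption~\ref{hyp:pb-dist}(ii). For Assumption~\ref{hyp:M2}: $M$ is injective because $G$ is connected (Assumption~\ref{hyp:connected}) hence has no isolated vertex, so every coordinate $x_n$ appears in some $M_\epsilon(x)$; indeed $Mx=0$ forces $x_n=0$ for all $n$. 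For the Lipschitz constant, ${\cal S}=\mathrm{Im}(M)$ and one computes $M^*M=\diag(d_1,\dots,d_N)$ (this is the observation $（M^*Mx)_n=d_n x_n$ already noted in the text), so $M^{-1}$ on $\cal S$ has operator norm $d_{\min}^{-1/2}$, and since $\nabla(f\circ M^{-1})(y)=(M^{-1})^*\nabla f(M^{-1}y)$ with $\nabla f$ being $\bar L$-Lipschitz (coordinatewise $\bar L$-Lipschitz), one gets that $\nabla(f\circ M^{-1})$ is $L$-Lipschitz on $\cal S$ with $L=\bar L/d_{\min}$. Hence the condition $\tau^{-1}-\rho^{-1}>L/2$ becomes exactly~\eqref{eq:cond-pas-DADMM+}. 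For Assumption~\ref{qualif}: the infimum is attained by Assumption~\ref{hyp:pb-dist}(iii) together with Lemma~\ref{consensus}, and the qualification condition $0\in\ri(\dom h-M\dom g)$ follows because $\dom h=\{y:y_\epsilon\in{\cal C}_2\}$ contains the subspace $M{\cal X}^N$... more precisely $\dom h \supset \mathrm{Im}(M)$ is not quite right, but $M\dom g = \{(M_\epsilon x)_\epsilon : x_n\in\dom g_n\}$ and taking $x=(u,\dots,u)$ with $u\in\cap_n\ri\dom g_n$ (nonempty by Assumption~\ref{hyp:pb-dist}(iv)) gives a point of $M\dom g$ lying in $\dom h$, and a standard relative-interior argument then yields $0\in\ri(\dom h - M\dom g)$.

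Then I would invoke Theorem~\ref{the:cv}: under these verified assumptions, the ADMM+ sequence $(x^k,\lambda^k)$ for Problem~\eqref{eq:pb-od} converges to a primal-dual point $(x^\star,\lambda^\star)$. By Lemma~\ref{consensus}, the primal component $x^\star=(x_\star,\dots,x_\star)^{\,}$ with $x_\star$ a minimizer of~\eqref{eq:pb-distrib}; in particular each coordinate $x_n^k$ converges to the common value $x_\star$. Since DADMM+ is literally this ADMM+ iteration rewritten coordinatewise (using $\lambda^0_{\{n,m\}}(n)=-\lambda^0_{\{n,m\}}(m)$ so that the antisymmetry relation holds from the start, and using separability of $g$ and $h$ to localize the $\prox$ steps), the conclusion follows. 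The main obstacle in writing this out cleanly is bookkeeping rather than mathematics: one must carefully match the indexing of the dual variables $\lambda^k_\epsilon(n)$ in the distributed algorithm with the single dual vector $\lambda^k\in{\cal Y}$ of abstract ADMM+, and verify the ``small calculation'' that collapses~\eqref{adm+-u}--\eqref{adm+-x} into the displayed node update; everything else is a routine verification of hypotheses.
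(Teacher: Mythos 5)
Your proposal follows essentially the same route as the paper's proof: reduce Theorem~\ref{the:DADMM+} to Theorem~\ref{the:cv} by checking Assumptions~\ref{cvx}--\ref{qualif} for the instance~\eqref{eq:pb-od} and by showing that $\nabla(f\circ M^{-1})$ is Lipschitz with constant $L=\bar L/d_{\mathrm{min}}$ (you via the operator norms $\|M^{-1}\|=\|(M^{-1})^*\|=d_{\mathrm{min}}^{-1/2}$, the paper via the same diagonal structure $M^*M=\diag(d_1,\dots,d_N)$ written coordinatewise), so that $\tau^{-1}-\rho^{-1}>L/2$ becomes~\eqref{eq:cond-pas-DADMM+}. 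Your write-up is in fact slightly more complete than the paper's, which silently omits the verification of injectivity and of the qualification condition and only records the Lipschitz computation before invoking Theorem~\ref{the:cv}.
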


\subsection{A Distributed Asynchronous Primal Dual Algorithm}

\color{black}
In the distributed \emph{synchronous} case, at each clock tick, a central scheduler
activates all the nodes of the network simultaneously and monitors the
communications that take place between these nodes once they have finished
their $\text{prox}(\cdot)$ and gradient operations. The meaning we give to ``distributed
asynchronous algorithm'' is that there is no central scheduler and that any
node can wake up randomly at any moment independently of the other nodes. This
mode of operation brings clear advantages in terms of complexity and
flexibility. \color{black}

The proposed \emph{Distributed Asynchronous Primal Dual} algorithm (DAPD) is
obtained by applying the randomized coordinate descent on the above algorithm.
As opposed to the latter, the resulting algorithm has the following attractive
property: at each iteration, a single agent, or possibly a \emph{subset} of
agents chosen at random, are activated.  More formally, let $(\xi^k)_{k\in \mathbb{N}}$ be a
sequence of i.i.d. random variables valued in $2^V$. The value taken by $\xi^k$
represents the agents that will be activated and perform a $\prox$ on their $x$
variable at moment $k$. The asynchronous algorithm goes as follows: 

\smallskip

\begin{breakbox}
\noindent  {\bf DAPD Algorithm}: \\
Initialization: $(x^0, \lambda^0)$. \\
Do 
\begin{itemize} 
\item Select a random set of agents $\xi^{k+1} = {\cal A}$, 
\item For all $n\in {\cal A}$, Agent $n$ performs the following operations:  
  \begin{itemize}
  \item For all $m \sim n$, do 
 \begin{align*}
  & \lambda_{\{n,m\}}^{k+1}(n) = 
    \frac{\lambda_{\{n,m\}}^k(n) - \lambda_{\{n,m\}}^k(m)}{2} \\
      & \ \ \ \ \ \ \ \ \ \ \ \ \ \ \ \ \ 
         + \frac{x^k_n - x^k_m}{2\rho} \, , 
 \end{align*} 
  \item $\displaystyle{x_n^{k+1} = \prox_{\tau g_n/d_n} \Bigl[ 
    (1 - \tau\rho^{-1}) x^k_n - \frac{\tau}{d_n} \nabla f_n(x_n^k)}$ \\
 $\displaystyle{ 
     \ \ \ \ \ \ \ \ \ \ \ \ \ \ \ \ \ 
    + \frac{\tau}{d_n} 
    \sum_{m\sim n} ( \rho^{-1} x^k_m + \lambda^k_{\{n,m\}}(m) ) \Bigr]}$,  
  \item For all $m\sim n$, send 
      $\{ x_n^{k+1}, \lambda_{\{n,m\}}^{k+1}(n) \}$ to Neighbor $m$. 
  \end{itemize} 
\item For all agents $n \not\in{\cal A}$, $x^{k+1}_n = x^k_n$, and 
$\lambda^{k+1}_{\{n,m\}}(n) = \lambda^{k}_{\{n,m\}}(n)$ for all $m\sim n$. 
\item Increment $k$.   
\end{itemize} 
\end{breakbox}
\smallskip

\begin{assumption}
\label{hyp:do-iid}
The collections of sets $\{{\cal A}_1, {\cal A}_2, \ldots\}$ such that 
${\mathbb P}[\xi^1 = {\cal A}_i ]$ is positive satisfies 
$\bigcup {\cal A}_i = V$. 
\end{assumption}
In other words, any agent is selected with a positive probability. 
The following theorem is proven in Appendix~\ref{prf:dapd}. 
\begin{theorem}
\label{th:dapd}
Let Assumptions~\ref{hyp:pb-dist},~\ref{hyp:connected}, and~\ref{hyp:do-iid} 
hold true. Assume that condition~(\ref{eq:cond-pas-DADMM+}) holds true.
Let $(x_n^{k+1})_{n\in V}$ be the output of the DAPD algorithm.
For any initial value $(x^0,\lambda^0)$, the sequences $x_1^k, \ldots, 
x_{N}^k$ converge almost surely as $k\to\infty$ to a random variable 
$x^\star$ supported by the set of minimizers of Problem~\eqref{eq:pb-distrib}.
\end{theorem}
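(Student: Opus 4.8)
The plan is to derive Theorem~\ref{th:dapd} as a direct consequence of the randomized Krasnosel'ski\u{\i}--Mann result (Theorem~\ref{the:randomKM}) applied to the $\alpha$-averaged operator whose Krasnosel'ski\u{\i}--Mann iterates generate DADMM+, exactly as Theorem~\ref{the:DADMM+} was obtained from Theorem~\ref{the:cv}. The first step is to recall from the proof of Theorem~\ref{the:DADMM+} (Appendix~\ref{app:proof-DADMM+}) the identification of the DADMM+ iteration with a Krasnosel'ski\u{\i}--Mann iteration $\zeta^{k+1} = \zeta^k + \eta(\oT\zeta^k - \zeta^k)$ for a suitable averaged operator $\oT$ acting on the space $\cH_\oV$ built from the dual variables $\lambda$ (and, via the change of variables of Section~\ref{sec:algo}, carrying the primal iterates $x$). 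By Lemma~\ref{lem:cv-fbpd} and the stepsize condition~\eqref{eq:cond-pas-DADMM+} — which, after the instantiation with $M^*M = \diag(d_n)$, plays the role of $\tau^{-1}-\rho^{-1} > L/2$ with $L = \bar L/d_{\mathrm{min}}$ — this operator $\oT$ is $\alpha$-averaged with $\tfrac12 \le \alpha < 1$, and $\fix(\oT)\neq\emptyset$ because Problem~\eqref{eq:pb-od} has a primal-dual solution (Assumption~\ref{hyp:pb-dist} plus the qualification consequence of Assumption~\ref{hyp:pb-dist}(iv) and Lemma~\ref{consensus}).

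The second step is the combinatorial heart of the argument: one must choose the block decomposition $\cH = \cH_1\times\cdots\times\cH_J$ so that activating a set of agents ${\cal A}\subseteq V$ corresponds exactly to applying $\hat\oT^{(\kappa)}$ for the block index set $\kappa$ associated with ${\cal A}$. Concretely, I would index blocks by the agents $n\in V$, attaching to agent $n$ the coordinates it owns and updates — its primal variable $x_n$ and the dual half-edges $\lambda_{\{n,m\}}(n)$ for $m\sim n$. The key verification is that the DAPD update of agent $n$ depends only on quantities agent $n$ has access to, and leaves all other blocks untouched — this is precisely what the ``$x_n^{k+1}=x_n^k$, $\lambda_{\{n,m\}}^{k+1}(n)=\lambda_{\{n,m\}}^k(n)$ for $n\notin{\cal A}$'' clause ensures — so that the DAPD iteration is literally $\zeta^{k+1} = \zeta^k + \eta(\hat\oT^{(\xi^{k+1})}\zeta^k - \zeta^k)$ with $\xi^{k+1}$ the random block set corresponding to ${\cal A}$. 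One subtlety to check carefully is that the relation $\lambda_{\{n,m\}}^k(n) = -\lambda_{\{n,m\}}^k(m)$, which held for all $k\ge 1$ in the synchronous case and was used to simplify $\bar z_\epsilon$, must be re-examined asynchronously; since an edge $\{n,m\}$ may have only one endpoint wake up, this antisymmetry is not preserved, which is exactly why the DAPD update is written with $+\lambda_{\{n,m\}}^k(m)$ rather than $-\lambda_{\{n,m\}}^k(n)$ and why the dual update is the averaging $\lambda_{\{n,m\}}^{k+1}(n) = (\lambda_{\{n,m\}}^k(n)-\lambda_{\{n,m\}}^k(m))/2 + (x_n^k - x_m^k)/(2\rho)$ — one must confirm this is genuinely the restriction of the full operator $\oT$ to the $n$-block.

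The third step is to match hypotheses and invoke Theorem~\ref{the:randomKM}. Assumption~\ref{hyp:do-iid} ($\bigcup{\cal A}_i = V$, i.e.\ every agent active with positive probability) translates directly into Condition~\eqref{eq:2} for the induced block sequence $(\xi^k)$, since each block $n$ belongs to the index set of some ${\cal A}_i$ with positive probability; the constant relaxation parameter $\eta$ used in DAPD trivially satisfies $0 < \liminf\eta_k \le \limsup\eta_k < 1/\alpha$ as long as $\eta < 1/\alpha$, which can be absorbed into the statement or fixed to $\eta=1$ when $\alpha=1/2$. Theorem~\ref{the:randomKM} then yields almost sure convergence of $\zeta^k$ to a random variable supported by $\fix(\oT)$. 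The final step is to transfer this back: reading off the primal component through the change of variables and using that every fixed point of $\oT$ gives a primal-dual point of~\eqref{eq:pd-vu}, hence (by Lemma~\ref{consensus}) a consensus tuple $(x^\star,\ldots,x^\star)$ with $x^\star$ a minimizer of~\eqref{eq:pb-distrib}, we conclude that each $x_n^k$ converges a.s.\ to the common random limit $x^\star$.

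I expect the main obstacle to be the careful bookkeeping in step two: verifying that the per-agent DAPD updates are exactly the coordinate restrictions $\hat\oT^{(\kappa)}$ of the \emph{same} operator $\oT$ appearing in the synchronous analysis, despite the loss of the dual antisymmetry invariant and despite the fact that a single edge is ``shared'' between two agent-blocks (so the blocks are not completely decoupled in their inputs, only in their outputs). Making precise that the output coordinates are partitioned cleanly among agents — even though each agent reads its neighbors' variables — is what legitimizes the coordinate-descent framework here, and it is the one place where a genuine (if elementary) verification, rather than a citation, is required.
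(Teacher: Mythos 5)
Your proposal follows essentially the same route as the paper's own proof: identify the V\~u--Condat operator of Lemma~\ref{lem:cv-fbpd} for $(\bar f,\bar g,h)=(f\circ M^{-1},g\circ M^{-1},h)$, block-decompose the space by agents (each block holding $x_n$ and the half-edge duals $\lambda_{\{n,m\}}(n)$, $m\sim n$), verify that the DAPD updates are exactly the coordinate restrictions $\hat\oT^{(\xi^{k+1})}$ --- including the loss of the antisymmetry $\lambda_{\{n,m\}}(n)=-\lambda_{\{n,m\}}(m)$, which the paper also singles out --- and then invoke Theorem~\ref{the:randomKM} under Assumption~\ref{hyp:do-iid} and conclude via Lemma~\ref{consensus}. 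This matches the argument in Appendix~\ref{prf:dapd}, so no further comparison is needed.
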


{\color{black} 
Before turning to the numerical illustrations, we note that the 
very recent paper~\cite{pes-rep-(sub)14} also deals with asynchronous 
primal-dual distributed algorithms by relying on the idea of random coordinate
descent}.

\section{Numerical illustrations}
\label{sec:num}

We address the problem of the so called $\ell_2$-regularized logistic
regression. Denoting by $m$ the number of observations and by $p$ the number
of features, our optimization problem is written 
\[
\min_{ \mathbf{x}\in\mathbb{R}^p}  \frac{1}{m} \sum_{t=1}^m \log\left( 1 + \mathrm{e}^{-y_t \mathbf{a}_t^\mathrm{T} \mathbf{x} } \right) + 
\mu \| \mathbf{x} \|^2  
\] 
where the $(y_t)_{t=1}^m$ are in $\{-1,+1\}$, the $(\mathbf{a}_t)_{t=1}^m$ are
in $\mathbb{R}^p$, and $\mu>0$ is a scalar. \\ 
We consider the case where the dataset is scattered
over a network. Indeed, massive data sets are often distributed on different
physical machines communicating together by means of an interconnection network
\cite[Chap.~2.5]{rauber2013parallel} and many algorithms have been implemented
for independent threads or processes running on distant cores, closer to the
data (see \emph{e.g.} \cite{tsianos2014efficient}, \cite{boyd2011distributed}
for MapReduce implementation of ADMM, \cite{jaggi2014communication} for Spark
implementation). 
Formally, denoting by $\{\mathcal{B}_n\}_{n=1}^N$ a partition of 
$\{1,\dots,m\}$, we assume that Agent~$n$ holds in its memory the data in
$\mathcal{B}_n$. Denoting by $G=(V,E)$ the graph that represents the 
connections between the agents, the regularized logistic regression problem is 
written in its distributed form as 
\begin{align*}
\nonumber \min_{ \mathbf{x} \in\mathbb{R}^{Np}} \sum_{n=1}^N  &\left( \sum_{t\in \mathcal{B}_n} \frac{1}{m} \log\left( 1 + \mathrm{e}^{-y_t \mathbf{a}_t^\mathrm{T} \mathbf{x}_n } \right) + \frac{\mu}{2N} \|  \mathbf{x}_n \|^2_2 \right)\\
 &+ \sum_{\epsilon\in E} \iota_{{\cal C}_2}(y_\epsilon). 
\end{align*}
Clearly, this is an instance of Problem~\eqref{eq:pb-od}. 

Our simulations will be performed on the following classical datasets:

\begin{center}
\begin{tabular}{|l|c|c|c|}
\hline
name & m  & p & density \\
\hline
\texttt{covtype} & $581 012$ & $54$ & dense \\
\hline
\texttt{alpha} & $500 000$  & $500$ & dense \\
\hline
\texttt{realsim} & $72 309$  & $20 958$ & sparse \\
\hline
\texttt{rcv1} & $20 242$  & $ 47 236 $ & sparse \\
\hline
\end{tabular}
\end{center}

The datasets \texttt{covtype}, \texttt{realsim}, and \texttt{rcv1} are taken
from the LIBSVM
website\footnote{\url{http://www.csie.ntu.edu.tw/~cjlin/libsvm/}} and
\texttt{alpha}  was from the Pascal 2008 Large Scale Learning
challenge\footnote{\url{http://largescale.ml.tu-berlin.de}}. We preprocessed
the dense datasets so that each feature has zero mean and unit variance. 
{The global Lipschitz constant for the gradient of the logistic function
was estimated by its classical upper bound $\hat{L} = 0.25 \max_{n=1,...,N} \| \mathbf{a}_n \|_2^2$.}
Finally, the regularization parameter $\mu$ was set to $10^{-4}$.

In our simulations, we also compared the DAPD algorithm presented in this paper
with some known algorithms that lend themselves to a distributed
implementation. These are:

\begin{itemize}
\item[-] DGD: the synchronous distributed algorithm \cite{ned-ozd-tac09}. Here, each agent performs a gradient descent then exchanges with its neighbors according to the Metropolis rule.

\item[-] ABG: the asynchronous broadcast gradient \cite{nedic2011}. In this
setup, one agent wakes up and sends its information to its neighbors. Any of 
these neighbors replaces its current value with the mean of this
value and the received value then performs a gradient descent.

\item[-] PWG: the pairwise gossip gradient \cite{tsitsiklis:bertsekas:athans:tac-1986,boyd2006randomized}. In this setup, one agent wakes up,
and selects one neighbor. Then each of the two agents performs a gradient 
descent, then exchanges and replaces its value by taking the mean between the 
former and the received value.
\end{itemize}

For the DGD, the ABG, and the PWG, the stepsizes have been taken decreasing as
$\gamma_0/k^{0.75}$. The other parameters (including $\gamma_0$) were chosen
automatically in sets of the form $ parameter_{theory} \times 10^i , i=\{1,..,10\}$  
{ (where $ parameter_{theory} $ is computed from the best theoretic bound 
with Lipschitz constant estimate $\hat{L}$) by running in parallel multiple 
instances of the algorithm with the different constants over $50$ iterations 
and choosing the constant giving the lowest functional cost.}

Whereas DAPD can allow for multiple agents to wake up at each iteration, we
considered only the single active agent case as it does not change much the
practical implementation. It is thus underperforming compared to a multiple
awaking agents scenario. Similarly to the previous algorithms, the stepsizes of 
DAPD have been chosen automatically
in sets of the form $ parameter_{theory} \times 10^i$ for $\tau$ and $\rho =
2\tau$ for fairness in terms of number of step sizes explored. 

The (total) functional cost was evaluated with the value at agent $1$ (the
agents are indistinguishable from a network point of view) and plotted versus
the number of local gradients used.

In Figure~\ref{fig:distopt}, we plot the $\ell_2$-regularized logistic cost at
some agent versus the number of local gradients used. We solved this problem
for each dataset on a $10\times 10$ 2D toroidal grid ($100$ agents) by
assigning the same number of observations per agent. We observe that the DAPD
is significantly faster than the other stochastic gradient methods. Finally,
we also remark that the quantity of information exchanged per iteration for
DAPD is roughly a vector of length shorter than $2Np$ ($8p$ with our graph)
which means that the number of transmissions is in general quite small compared
to the size of the whole dataset (roughly $Tp$).

In Figure~\ref{fig:distopt2}, we plot the same quantities for the \texttt{rcv1} dataset but now the same number of observations are dispatched over i) a $5 \times 5$ toroidal grid ($25$ agents) and ii) a $50$-nodes complete network.
 
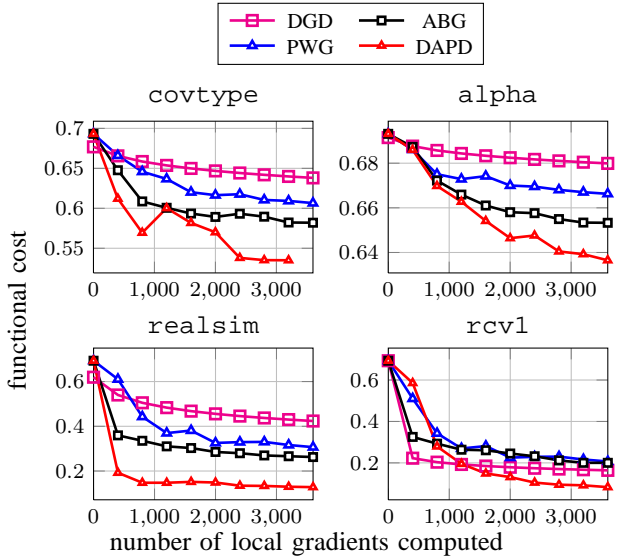
\begin{figure}[h!]
  \centering
  \begin{tikzpicture}
    \begin{groupplot}[group style={group size= 2 by 2},height=0.6\figureheight,width=0.75\figurewidth]

        \nextgroupplot[
title={\texttt{covtype}},
title style={at={(axis description cs:0.5,0.93)},anchor=south},
xmin=0,
xmax=3600,
xmajorgrids,
ymajorgrids,
]

\addplot [
color=magenta,
line width=1.0pt,
mark size=2.0pt,
mark=square,
mark options={solid,,fill=white,draw=magenta},
]
coordinates{
(  000.0  , 0.6768816898)
(  400.0  , 0.6656072215)
(  800.0  , 0.6585250881)
( 1200.0  , 0.6536536890)
( 1600.0  , 0.6498842235)
( 2000.0  , 0.6467908035)
( 2400.0  , 0.6441630916)
( 2800.0  , 0.6418777178)
( 3200.0  , 0.6398551909)
( 3600.0  , 0.6380410809)
};\label{plots:dgd}

\addplot [
color=blue,
line width=1.0pt,
mark size=1.8pt,
mark=triangle*,
mark options={solid,,fill=white,draw=blue},
]
coordinates{
(  000.0  , 	 0.6931471806 )
(  400.0  , 	 0.6659895975 )
(  800.0  , 	 0.6458020606 )
( 1200.0  , 	 0.6367368331 )
( 1600.0  , 	 0.6201345730 )
( 2000.0  , 	 0.6164225854 )
( 2400.0  , 	 0.6176081929 )
( 2800.0  , 	 0.6104653052 )
( 3200.0  , 	 0.6092406151 )
( 3600.0  , 	 0.6065067719 )
};\label{plots:pwg}

\addplot [
color=black,
line width=1.0pt,
mark size=1.4pt,
mark=square*,
mark options={solid,,fill=white,draw=black},
]
coordinates{
(  000.0  ,	 0.6931471806)
(  400.0  ,	 0.6476240299)
(  800.0  ,	 0.6085797006)
( 1200.0  ,	 0.6006191503)
( 1600.0  ,	 0.5934456252)
( 2000.0  ,	 0.5892014413)
( 2400.0  ,	 0.5932097067)
( 2800.0  ,	 0.5895189511)
( 3200.0  ,	 0.5822150958)
( 3600.0  ,	 0.5819610343)
};\label{plots:abg}

\addplot [
color=red,
line width=1.0pt,
mark size=1.5pt,
mark=triangle*,
mark options={solid,fill=white,draw=red},
]
coordinates{
(  000.0  , 0.6931471806 )
(  400.0  , 0.6120611939 )
(  800.0  , 0.5694247381 )
( 1200.0  , 0.6001922446 )
( 1600.0  , 0.5816862113 )
( 2000.0  , 0.5700125227 )
( 2400.0  , 0.5379502820 )
( 2800.0  , 0.53512820 )
( 3200.0  , 0.53495820 )
};\label{plots:dapd}

\coordinate (top) at (rel axis cs:0,1);

        \nextgroupplot[
title={\texttt{alpha}},
title style={at={(axis description cs:0.5,0.93)},anchor=south},
xmin=0,
xmax=3600,
xmajorgrids,
ymajorgrids]

\addplot [
color=magenta,
line width=1.0pt,
mark size=2.0pt,
mark=square,
mark options={solid,,fill=white,draw=magenta},
]
coordinates{
(  000.0  ,		 0.6914727355)
(  400.0  ,		 0.6876510153)
(  800.0  ,		 0.6857113973)
( 1200.0  ,		 0.6843698687)
( 1600.0  ,		 0.6833210588)
( 2000.0  ,		 0.6824497368)
( 2400.0  ,		 0.6816994781)
( 2800.0  ,		 0.6810378898)
( 3200.0  ,		 0.6804444204)
( 3600.0  ,		 0.6799051228)
};\label{plots:dgd}

\addplot [
color=blue,
line width=1.0pt,
mark size=1.8pt,
mark=triangle*,
mark options={solid,,fill=white,draw=blue},
]
coordinates{
(  000.0  ,	 0.6931471806)
(  400.0  ,	 0.6860664768)
(  800.0  ,	 0.6750560738)
( 1200.0  ,	 0.6728722219)
( 1600.0  ,	 0.6741991754)
( 2000.0  ,	 0.6700020756)
( 2400.0  ,	 0.6695080892)
( 2800.0  ,	 0.6680600906)
( 3200.0  ,	 0.6670197472)
( 3600.0  , 	 0.6662792512)
};\label{plots:pwg}

\addplot [
color=black,
line width=1.0pt,
mark size=1.4pt,
mark=square*,
mark options={solid,,fill=white,draw=black},
]
coordinates{
(  000.0  ,	 	 0.6931471806)
(  400.0  ,	 	 0.6872864521)
(  800.0  ,	 	 0.6722229884)
( 1200.0  ,	 	 0.6659334153)
( 1600.0  ,	 	 0.6610515895)
( 2000.0  ,	 	 0.6580416051)
( 2400.0  ,	 	 0.6576168911)
( 2800.0  ,	 	 0.6549667443)
( 3200.0  ,	 	 0.6533963330)
( 3600.0  ,	 	 0.6532992500)
};\label{plots:abg}

\addplot [
color=red,
line width=1.0pt,
mark size=1.5pt,
mark=triangle*,
mark options={solid,fill=white,draw=red},
]
coordinates{
(  000.0  , 0.6931471806 )
(  400.0  , 0.6859895975 )
(  800.0  , 0.6698020606 )
( 1200.0  , 0.6627368331 )
( 1600.0  , 0.6541345730 )
( 2000.0  , 0.6464225854 )
( 2400.0  , 0.6476081929 )
( 2800.0  , 0.6404653052 )
( 3200.0  , 0.6392406151 )
( 3600.0  , 0.6365067719 )
};\label{plots:dapd}

        \nextgroupplot[
title={\texttt{realsim}},
title style={at={(axis description cs:0.5,0.93)},anchor=south},
xmin=0,
xmax=3600,
xmajorgrids,
ymajorgrids]

\addplot [
color=magenta,
line width=1.0pt,
mark size=2.0pt,
mark=square,
mark options={solid,,fill=white,draw=magenta},
]
coordinates{

(  000.0  , 	 0.6194087961)
(  400.0  , 	 0.5399587272)
(  800.0  , 	 0.5050685681)
( 1200.0  , 	 0.4833699836)
( 1600.0  , 	 0.4677423385)
( 2000.0  , 	 0.4555617758)
( 2400.0  , 	 0.4456067182)
( 2800.0  , 	 0.4372083917)
( 3200.0  , 	 0.4299595500)
( 3600.0  , 	 0.4235934746)
};\label{plots:dgd}

\addplot [
color=blue,
line width=1.0pt,
mark size=1.8pt,
mark=triangle*,
mark options={solid,,fill=white,draw=blue},
]
coordinates{
(  000.0  , 0.6931471806)
(  400.0  , 0.6101871034)
(  800.0  , 0.4427174885)
( 1200.0  , 0.3695722672)
( 1600.0  , 0.3810675788)
( 2000.0  , 0.3255061845)
( 2400.0  , 0.3292482741)
( 2800.0  , 0.3301961952)
( 3200.0  , 0.3163317944)
( 3600.0  , 0.3067466149)
};\label{plots:pwg}

\addplot [
color=black,
line width=1.0pt,
mark size=1.4pt,
mark=square*,
mark options={solid,,fill=white,draw=black},
]
coordinates{
(  000.0  ,	 0.6931471806)
(  400.0  ,	 0.3594464805)
(  800.0  ,	 0.3352175387)
( 1200.0  ,	 0.3102932490)
( 1600.0  ,	 0.3027204431)
( 2000.0  ,	 0.2852179787)
( 2400.0  ,	 0.2796104298)
( 2800.0  ,	 0.2696949443)
( 3200.0  ,	 0.2661087170)
( 3600.0  ,	 0.2625060894)
};\label{plots:abg}

\addplot [
color=red,
line width=1.0pt,
mark size=1.5pt,
mark=triangle*,
mark options={solid,fill=white,draw=red},
]
coordinates{
(  000.0  ,	 0.693147180 )
(  400.0  ,	 0.192264146 )
(  800.0  ,	 0.147592514 )
( 1200.0  ,	 0.147592514 )
( 1600.0  ,	 0.151489140 )
( 2000.0  ,	 0.148624409 )
( 2400.0  ,	 0.134453886 )
( 2800.0  ,	 0.133401812 )
( 3200.0  ,	 0.129717155 )
( 3600.0  ,	 0.127928063 )
};\label{plots:dapd}

        \nextgroupplot[
title={\texttt{rcv1}},
title style={at={(axis description cs:0.5,0.93)},anchor=south},
xmin=0,
xmax=3600,
xmajorgrids,
ymajorgrids]

\addplot [
color=magenta,
line width=1.0pt,
mark size=2.0pt,
mark=square,
mark options={solid,,fill=white,draw=magenta},
]
coordinates{
(  000.0  ,	 0.693147180)
(  400.0  ,	 0.2226550814)
(  800.0  ,	 0.2034553810)
( 1200.0  ,	 0.1924458241)
( 1600.0  ,	 0.1847568984)
( 2000.0  ,	 0.1788868448)
( 2400.0  ,	 0.1741666138)
( 2800.0  ,	 0.1702374163)
( 3200.0  ,	 0.1668841412)
( 3600.0  ,	 0.1639677717)
};\label{plots:dgd}

\addplot [
color=blue,
line width=1.0pt,
mark size=1.8pt,
mark=triangle*,
mark options={solid,,fill=white,draw=blue},
]
coordinates{
(  000.0  , 0.6931471806)
(  400.0  , 0.5101871034)
(  800.0  , 0.3427174885)
( 1200.0  , 0.2695722672)
( 1600.0  , 0.2810675788)
( 2000.0  , 0.2255061845)
( 2400.0  , 0.2292482741)
( 2800.0  , 0.2301961952)
( 3200.0  , 0.2163317944)
( 3600.0  , 0.2067466149)
};\label{plots:pwg}

\addplot [
color=black,
line width=1.0pt,
mark size=1.4pt,
mark=square*,
mark options={solid,,fill=white,draw=black},
]
coordinates{
(  000.0  , 0.6931471806)
(  400.0  , 0.3253581521)
(  800.0  , 0.2927333714)
( 1200.0  , 0.2635785775)
( 1600.0  , 0.2608966047)
( 2000.0  , 0.2452571961)
( 2400.0  , 0.2319612090)
( 2800.0  , 0.2120348835)
( 3200.0  , 0.2002494831)
( 3600.0  , 0.2002494831)
};\label{plots:abg}

\addplot [
color=red,
line width=1.0pt,
mark size=1.5pt,
mark=triangle*,
mark options={solid,fill=white,draw=red},
]
coordinates{
(  000.0  , 0.6931471806)
(  400.0  , 0.5855093832)
(  800.0  , 0.2792802835)
( 1200.0  , 0.1952896382)
( 1600.0  , 0.1487922720)
( 2000.0  , 0.1314744795)
( 2400.0  , 0.1048575734)
( 2800.0  , 0.0946782417)
( 3200.0  , 0.0916883266)
( 3600.0  , 0.082817352)
};\label{plots:dapd}

\coordinate (bot) at (rel axis cs:1,0);
\coordinate (bot2) at (rel axis cs:1,-0.15);

\end{groupplot}
\path (top-|current bounding box.west)-- 
          node[anchor=south,rotate=90] {functional cost} 
          (bot-|current bounding box.west);
\path (bot2-|current bounding box.west)-- 
          node[anchor=north] {number of local gradients computed} 
          (bot2-|current bounding box.east);
\path (top|-current bounding box.north)--
      coordinate(legendpos)
      (bot|-current bounding box.north);
\matrix[
    matrix of nodes,
    anchor=south,
    draw,
    inner sep=0.2em,
    draw
  ]at([yshift=1ex]legendpos)
  {
    \ref{plots:dgd}& {\footnotesize DGD} &[5pt]
    \ref{plots:abg}& {\footnotesize ABG} \\
    \ref{plots:pwg}& {\footnotesize PWG} &[5pt]
    \ref{plots:dapd}&{\footnotesize  DAPD} \\};
\end{tikzpicture}
  \caption{Comparison of distributed algorithms on a $5\times 5$ grid.}
\label{fig:distopt}
\end{figure}

\begin{figure}[h!]
  \centering
  \begin{tikzpicture}
    \begin{groupplot}[group style={group size= 2 by 1},height=0.6\figureheight,width=0.75\figurewidth]

        \nextgroupplot[
title={\texttt{ \scriptsize rcv1, $10\times 10$ grid}},
title style={at={(axis description cs:0.5,0.93)},anchor=south},
xmin=0,
xmax=3600,
xmajorgrids,
ymajorgrids,
]

\addplot [
color=magenta,
line width=1.0pt,
mark size=2.0pt,
mark=square,
mark options={solid,,fill=white,draw=magenta},
]
coordinates{
(0 	,0.6931471806)
(400 	,0.411614928)
(800 	,0.381428499)
(1200 	,0.362237697)
(1600 	,0.348341063)
(2000 	,0.337612563)
(2400 	,0.328965838)
(2800 	,0.321773777)
(3200 	,0.315646603)
(3600 	,0.310328316)
};\label{plots:dgd}

\addplot [
color=blue,
line width=1.0pt,
mark size=1.8pt,
mark=triangle*,
mark options={solid,,fill=white,draw=blue},
]
coordinates{
(000  , 0.6931471806 )
(400 ,0.4045125296 )
(800 ,0.3034318120 )
(1200,0.2855616037 )
(1600,0.2287505935 )
(2000,0.2340736002 )
(2400,0.1762020474 )
(2800,0.1853240999 )
(3200,0.1792963629 )
(3600,0.1826410775 )
};\label{plots:pwg}

\addplot [
color=black,
line width=1.0pt,
mark size=1.4pt,
mark=square*,
mark options={solid,,fill=white,draw=black},
]
coordinates{
(000 , 0.6931471806 )
(400 , 0.6697096702 )
(800 , 0.2942965351 )
(1200, 0.2471375860 )
(1600, 0.2348206963 )
(2000, 0.2303403102 )
(2400, 0.2508702781 )
(2800, 0.2110502763 )
(3200, 0.2026678806 )
(3600, 0.1965281742 )
};\label{plots:abg}

\addplot [
color=red,
line width=1.0pt,
mark size=1.5pt,
mark=triangle*,
mark options={solid,fill=white,draw=red},
]
coordinates{
(0 	 , 	 0.6931471806)
(400 	 , 	 0.4013624820)
(800 	 , 	 0.2895504063)
(1200 	 , 	 0.3190781384)
(1600 	 , 	 0.2952178457)
(2000 	 , 	 0.2248980272)
(2400 	 , 	 0.2358225621)
(2800 	 , 	 0.2246819814)
(3200 	 , 	 0.1887560508)
(3600 	 , 	 0.1798911940)
};\label{plots:dapd}

\coordinate (top) at (rel axis cs:0,1);

        \nextgroupplot[
title={\texttt{\scriptsize rcv1, $50$-nodes Complete graph}},
title style={at={(axis description cs:0.5,0.93)},anchor=south},
xmin=0,
xmax=3000,
xmajorgrids,
ymajorgrids]

\addplot [
color=magenta,
line width=1.0pt,
mark size=2.0pt,
mark=square,
mark options={solid,,fill=white,draw=magenta},
]
coordinates{
(  000.0  ,	 0.6931471803)
(  400.0  ,	 0.2924269517)
(  800.0  ,	 0.2695282062)
( 1200.0  ,	 0.2559161785)
( 1600.0  ,	 0.2463605423)
( 2000.0  ,	 0.2390618054)
( 2400.0  ,	 0.2331919456)
( 2800.0  ,	 0.2283036297)
( 3200.0  ,	 0.2241287725)
( 3600.0  ,	 0.2204945231)
};\label{plots:dgd}

\addplot [
color=blue,
line width=1.0pt,
mark size=1.8pt,
mark=triangle*,
mark options={solid,,fill=white,draw=blue},
]
coordinates{
(  000.0  ,	 0.6931471806)
(  400.0  ,	 0.333581521)
(  800.0  ,	 0.30333714)
( 1200.0  ,	 0.285785775)
( 1600.0  ,	 0.278966047)
( 2000.0  ,	 0.2552571961)
( 2400.0  ,	 0.240612090)
( 2800.0  ,	 0.220348835)
( 3200.0  ,	 0.202494831)
( 3600.0  ,	 0.2002494831)
};\label{plots:pwg}

\addplot [
color=black,
line width=1.0pt,
mark size=1.4pt,
mark=square*,
mark options={solid,,fill=white,draw=black},
]
coordinates{
(  000.0  , 0.6931471806)
(  400.0  , 0.3253581521)
(  800.0  , 0.27333714)
( 1200.0  , 0.2635785775)
( 1600.0  , 0.2408966047)
( 2000.0  , 0.2352571961)
( 2400.0  , 0.2319612090)
( 2800.0  , 0.2120348835)
( 3200.0  , 0.202494831)
( 3600.0  , 0.2002494831)
};\label{plots:abg}

\addplot [
color=red,
line width=1.0pt,
mark size=1.5pt,
mark=triangle*,
mark options={solid,fill=white,draw=red},
]
coordinates{
(  000.0  , 0.6931471806)
(  400.0  , 0.455093832)
(  800.0  , 0.282802835)
( 1200.0  , 0.1952896382)
( 1600.0  , 0.1487922720)
( 2000.0  , 0.1314744795)
( 2400.0  , 0.1048575734)
( 2800.0  , 0.0946782417)
( 3200.0  , 0.0916883266)
( 3600.0  , 0.0712817352)
};\label{plots:dapd}

\coordinate (bot) at (rel axis cs:1,0);
\coordinate (bot2) at (rel axis cs:1,-0.15);

\end{groupplot}
\path (top-|current bounding box.west)-- 
          node[anchor=south,rotate=90] {functional cost} 
          (bot-|current bounding box.west);
\path (bot2-|current bounding box.west)-- 
          node[anchor=north] {number of local gradients computed} 
          (bot2-|current bounding box.east);
\path (top|-current bounding box.north)--
      coordinate(legendpos)
      (bot|-current bounding box.north);
\matrix[
    matrix of nodes,
    anchor=south,
    draw,
    inner sep=0.2em,
    draw
  ]at([yshift=1ex]legendpos)
  {
    \ref{plots:dgd}& {\footnotesize DGD} &[5pt]
    \ref{plots:abg}& {\footnotesize ABG} \\
    \ref{plots:pwg}& {\footnotesize PWG} &[5pt]
    \ref{plots:dapd}&{\footnotesize  DAPD} \\};
\end{tikzpicture}
  \caption{Comparison between different networks on \texttt{rcv1} dataset.}
\label{fig:distopt2}
\end{figure}
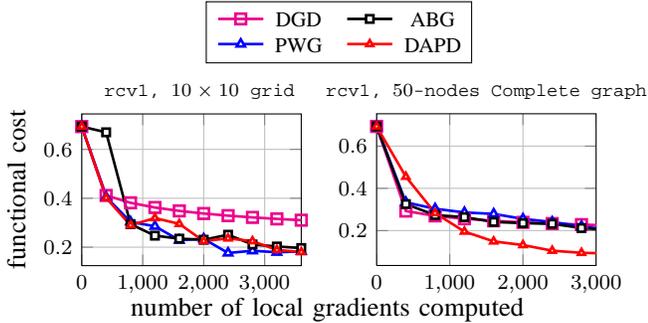

\section{Conclusions and Perspectives}

This paper introduced a general framework for stochastic coordinate descent.
The framework was used on a new algorithm called ADMM+ which has roots in 
a recent work by V\~u and Condat. As a byproduct, we obtained 
an asynchronous distributed algorithm
which enables the processing of distinct blocks on different machines.
Future works include an analysis of the convergence rate of our algorithms along with efficient stepsizes strategies.

\appendix 

\subsection{Proof of Theorem~\ref{the:cv}}
\label{sec:proofCV}

By setting ${\cal E} = {\cal S}$ and by assuming that $\cal E$ is equipped 
with the same inner product as $\cal Y$, one can notice that the functions 
$\bar f = f\circ M^{-1}$, $\bar g=g\circ M^{-1}$ and $h$ satisfy the conditions
of Theorem~\ref{vu}. Moreover, since $(\bar f + \bar g)^* = 
(f +g)^* \circ M^*$, one can also notice that $(x^\star, \lambda^\star)$ is 
a primal-dual point associated with Eq.~\eqref{duality} if and only if 
$(M x^\star, \lambda^\star)$ is a primal-dual point associated with 
Eq.~\eqref{pd-vu}.  \\
To recover ADMM+ from the iterations~\eqref{vu-l}--\eqref{vu-y}, the 
starting point is Moreau's identity~\cite[Th.~14.3]{livre-combettes} which
reads 
\[
\prox_{\rho^{-1}h^*}(x)+\rho^{-1}\prox_{\rho h}(\rho x) = x\,.
\] 
Setting $x^k = M^{-1} y^k$ and 
\begin{align*}
z^{k+1} &= \prox_{\rho h}( y^k + \rho \lambda^k ) \\
&= \argmin_{w\in{\cal Y}} \Bigl[ 
h(w) + \frac{\| w - (M x^k + \rho \lambda^k) \|^2}{2\rho} \Bigr]  \ , 
\end{align*} 
Equation~\eqref{vu-l} can be rewritten thanks to Moreau's identity 
\[
\lambda^{k+1} = \lambda^k + \rho^{-1}( M x^k - z^{k+1} )  \ .
\]
Now, Equation~\eqref{vu-y} can be rewritten as 
$$
y^{k+1} \!\!= \!\!\argmin_{w\in{\cal S}} \bar g(w)+\langle \nabla \bar f(y^k),\!w\rangle +\frac {\|w\!-\!y^k\!+\!\tau(2\lambda^{k+1}\!\!-\!\lambda^k)\|^2}{2\tau}
$$
Upon noting that $\bar g(Mx)=g(x)$ and $\langle\nabla \bar f(y^k),Mx\rangle =
\langle (M^{-1})^* \nabla f(M^{-1} M x^{k}),Mx\rangle = 
\langle \nabla f(x^{k}),x\rangle$, the above equation becomes
$$
x^{k+1} \!\!= \!\!\argmin_{w\in{\cal X}} g(w)
+\langle \nabla f(x^{k}),\!w\rangle +\frac {\|Mw\!-\!u^{k+1}\!\!+\!\tau\lambda^{k+1}\|^2}{2\tau}
$$
where 
\begin{align*} 
u^{k+1} &= Mx^{k}+\tau(\lambda^{k}-\lambda^{k+1}) \\
 &= (1 - \rho^{-1}\tau) M x^{k} + \tau\rho^{-1} z^{k+1} . 
\end{align*} 
The iterates $(z^{k+1}, \lambda^{k+1}, u^{k+1}, x^{k+1} )$ are those of the 
ADMM+. 

\subsection{Proof of Theorem~\ref{the:randomKM}}
\label{sec:proofrandomKM}
The main idea behind the proof can be found in~\cite{iut-cdc13}. 
Define the operator $\oU = (1-\eta_k)\oI+\eta_k\oT$ (we omit the index $k$ in $\oU$ to simplify the notation); similarly, define  $\oU^{(\kappa)} = (1-\eta_k)\oI+\eta_k\hat\oT^{(\kappa)}$.
Remark that the operator $\oU$ is $(\alpha\eta_k)$-averaged. 

The iteration~(\ref{eq:3}) reads $x^{k+1}=\oU^{(\xi^{k+1})} x^k$.
Set $p_\kappa = {\mathbb P}(\xi_1=\kappa)$ for any $\kappa\in 2^{\cal J}$.
Denote by $\|x\|^2=\langle x,x\rangle$ the squared norm in $\cH$. Define a new inner product
$x \bullet y = \sum_{j=1}^J q_j \langle x_j, y_j \rangle_{j}$ on $\cH$ where
$
q_j^{-1} = \sum_{\kappa\in 2^{\cal J}} p_\kappa \un_{\{j\in \kappa\}}
$
and let $\leftnorm x \rightnorm^2 = x \bullet x$ 
be its associated squared norm. Consider any $x^\star\in \fix(\oT)$.
Conditionally to the sigma-field $\mathcal{F}^k = \sigma(\xi_1,\ldots, \xi^k)$ we have
\begin{align*}
 & \EE[\leftnorm x^{k+1} - x^\star\rightnorm^2 \, | \, 
 \mathcal{F}^k ] 
= \sum_{\kappa\in 2^{\cal J}} p_\kappa \leftnorm \hat{\oU}^{(\kappa)}x^k -  x^\star\rightnorm^2 \\
&= \sum_{\kappa\in 2^{\cal J}} p_\kappa\sum_{j\in\kappa}q_j\|\oU_j x^k-x^\star_j\|^2
+ \sum_{\kappa\in 2^{\cal J}} p_\kappa\sum_{j\notin\kappa}q_j\|x^k_j-x^\star_j\|^2\\
&= \leftnorm x^k-x^\star\rightnorm^2\!+\!\!\sum_{\kappa\in 2^{\cal J}} \!p_\kappa\!\sum_{j\in\kappa}q_j\!\left(\|\oU_j x^k-x^\star_j\|^2\!-\!\|x^k_j-x^\star_j\|^2\right) \\
&= \leftnorm x^k-x^\star\rightnorm^2+\sum_{j=1}^J\left(\|\oU_j x^k-x^\star_j\|^2-\|x^k_j-x^\star_j\|^2\right) \\
&=\leftnorm x^k-x^\star\rightnorm^2 +\left(\|\oU x^k-x^\star\|^2-\|x^k-x^\star\|^2\right)\,.
\end{align*}
Using that $\oU$ is $(\alpha\eta_k)$-averaged and that $x^\star$ is a fixed point of $\oU$, the term enclosed in the parentheses is no larger than $-\frac{1-\alpha \eta_k}{\alpha\eta_k}\|(\oI-\oU) x^k\|^2$\,. 
As $\oI-\oU = \eta_k(\oI-\oT)$, we obtain:
\begin{multline}
  \EE[\leftnorm x^{k+1} - x^\star\rightnorm^2 \, | \, \mathcal{F}^k ]
  \leq \leftnorm x^k-x^\star\rightnorm^2 \\ -
  \eta_k(1-\alpha\eta_k)\|(\oI-\oT) x^k \|^2
\label{eq:doob}
\end{multline}
which shows that $\leftnorm x^{k} - x^\star\rightnorm^2$ is a 
nonnegative supermartingale with respect to the filtration $({\mathcal F}_k)$. 
As such, it converges with probability one towards a random variable 
that is finite almost everywhere.

Given a countable dense subset $H$ of $\fix(\oT)$, there is a probability
one set on which $\leftnorm x^k - {\boldsymbol x} 
\rightnorm \to X_{\boldsymbol x} \in [0, \infty)$ for all 
${\boldsymbol x} \in H$. 
Let $x^\star \in \fix(\oT)$, let $\varepsilon > 0$, and choose 
${\boldsymbol x} \in H$ such that $\leftnorm x^\star - \boldsymbol x
\rightnorm \leq \varepsilon$. With probability one, we have  
\[
\leftnorm x^k -x^\star \rightnorm \leq  
\leftnorm x^k - \boldsymbol x \rightnorm + 
\leftnorm \boldsymbol x - x^\star \vphantom{x^k} \rightnorm \leq 
X_{\boldsymbol x} + 2 \varepsilon 
\]  
for $k$ large enough. Similarly, 
$\leftnorm x^k - x^\star \rightnorm \geq  
X_{\boldsymbol x} - 2 \varepsilon$ for $k$ large enough.  
We therefore obtain: 
\begin{description}
\item[{\bf C1 :}] There is a probability one set on which
$\leftnorm x^k - x^\star \rightnorm$ converges for every 
$x^\star \in \fix(\oT)$. 
\end{description} 
Getting back to \eqref{eq:doob}, taking the expectations on
both sides of this inequality and iterating over $k$, we obtain 
\[
\sum_{k=0}^\infty \eta_k(1-\alpha\eta_k)\EE \|(\oI-\oT) x^k\|^2 \leq 
\leftnorm x^0 - x^\star\rightnorm^2 . 
\]

Using the assumption on $(\eta_k)_{k\in \NN}$, it is straightforward to see 
that $\sum_{k=0}^\infty \eta_k(1-\alpha\eta_k) = + \infty$ and thus that 
$\sum_{k=0}^\infty \EE \|(\oI-\oT) x^k\|^2$ is finite. By Markov's inequality 
and Borel Cantelli's lemma, we therefore obtain: 
\begin{description}
\item[{\bf C2 :}] $(\oI-\oT) x^k \to 0$ almost surely. 
\end{description} 
We now consider an elementary event in the probability one set where {\bf C1} 
and {\bf C2} hold. On this event, 
since $\leftnorm x^k - x^\star \rightnorm$ converges for 
$x^\star \in \fix(\oT)$, the sequence $(x^k)_{k\in\NN}$ is bounded. 
Since $\oT$ is $\alpha$-averaged, it is continuous, and {\bf C2} shows that 
all the accumulation points of $(x^k)_{k\in\NN}$ are in $\fix(\oT)$. It remains to show
that these accumulation points reduce to one point. Assume that 
$x_1^\star$ is an accumulation point. By {\bf C1}, 
$\leftnorm x^k - x^\star_1 \rightnorm$ converges. Therefore, 
$\lim \leftnorm x^k - x^\star_1 \rightnorm = 
\liminf \leftnorm x^k - x^\star_1 \rightnorm = 0$, which shows that
$x^\star_1$ is unique. 

\subsection{Proof of Theorem~\ref{the:DADMM+}}
\label{app:proof-DADMM+}

The proof simply consists in checking that the assumptions of Theorem~\ref{the:cv} are satisfied.
To that end, we compute the Lipschitz constant $L$ of $\nabla(f\circ M^{-1})$ as a function of $\bar L$.
Recall that $\cal S$ is the image of $M$. For any $y\in {\cal S}$, note that
\begin{equation}
\label{eq:sacrecoeur}
\nabla (f\circ M^{-1})(y) = M(M^*M)^{-1} \nabla f(M^{-1}y)\,.
\end{equation}
Using the definition of $M$, the operator $M^*M$ is diagonal. More precisely, for any $x\in {\mathbb R}^{N}$, say $x=(x_n)_{n\in V}$,
the $n$th component of $(M^*M)x$ coincides with $d_nx_n$ where $d_n=\mathrm{card}\{m\in V\,:\,n\sim m\}$ is the degree of node $n$ in the graph~$G$.
Thus, $\|M(M^*M)^{-1}x\|^2 = \sum_{n\in V} d_n^{-1}\|x_n\|^2$. As a consequence of the latter equality and~(\ref{eq:sacrecoeur}), 
for any $(y,y')\in{\cal S}^2$, say $y=Mx$ and $y'=Mx'$, one has
\begin{multline*}
  \|\nabla (f\circ M^{-1})(y)-\nabla (f\circ M^{-1})(y')\|^2 \\ =
    \sum_n d_n^{-1}\|\nabla f_n(x_n)-\nabla f_n(x_n')\|^2\,.
  \end{multline*}
Under the stated hypotheses, we can write for all $n$,  $\|\nabla f_n(x_n)-\nabla f_n(x_n')\|^2\leq \bar L^2\|x_n-x_n'\|^2$.
Thus, 
\begin{equation}
\|\nabla (f\circ M^{-1})(y)-\nabla (f\circ M^{-1})(y')\|^2 \leq (\bar L^2/d_{\mathrm{min}}) \|x-x'\|^2\label{eq:pantheon}
\end{equation}
where $d_{\mathrm{min}}=\min(d_n:n\in V)$. On the otherhand, $\|y-y'\|^2=\|M(x-x')\|^2=\sum_n d_n\|x_n-x_n'\|^2\geq d_{\mathrm{min}} \|x-x'\|^2$.
Plugging the latter inequality into~(\ref{eq:pantheon}), we finally obtain 
$\|\nabla (f\circ M^{-1})(y)-\nabla (f\circ M^{-1})(y')\|^2\leq (\bar L/d_{\mathrm{min}})^2 \|x-x'\|^2$. This proves that 
$\nabla(f\circ M)$ is Lipschitz continuous with constant $L=\bar L/d_{\mathrm{min}}$.
The final result follows by immediate application of Theorem~\ref{the:cv}.
\color{black}

\subsection{Proof of Theorem~\ref{th:dapd}}
\label{prf:dapd}
Let $(\bar f, \bar g, h) = ( f\circ M^{-1}, g \circ M^{-1}, h)$ where 
$f,g,h$ and $M$ are those of Problem~\eqref{eq:pb-od}. For these functions, 
write Equations~\eqref{alg-vu} as 
$(\lambda^{k+1}, y^{k+1} ) = \oT (\lambda^k, y^k)$. By Lemma~\ref{lem:cv-fbpd},
the operator $\oT$ is an $\alpha$-averaged operator acting on the space 
${\cal H} = {\cal Y} \times {\cal S}$, where ${\cal S}$ is the image of ${\cal X}^{N}$ 
by $M$. For any $n \in V$, let $S_n$ be the selection 
operator on $\cal H$ defined as 
$S_n(\lambda, Mx) = 
( ( \lambda_{\epsilon}(n) )_{\epsilon \in E \, : \, n \in \epsilon}, x_n )$. 
Then it is easy to see that up to an element reordering, 
${\cal H} = S_1({\cal H}) \times \cdots \times S_{N}({\cal H})$. 
Identifying the set $\cal J$ introduced before the statement of 
Theorem~\ref{the:randomKM} with $V$, the operator $\oT^{(\xi^k)}$ is 
defined as follows: if $n \in \xi^k$, then 
$S_n(\oT^{(\xi^k)}(\lambda, Mx)) = S_n(\oT(\lambda, Mx))$ while if 
$n \not\in \xi^k$, then 
$S_n(\oT^{(\xi^k)}(\lambda, Mx)) = S_n(\lambda, Mx)$. 
We know by Theorem~\ref{the:randomKM} that the sequence 
$(\lambda^{k+1}, M x^{k+1}) = \oT^{(\xi^{k+1})}(\lambda^{k}, M x^{k})$ 
converges almost surely to a primal-dual point of Problem~\eqref{pd-vu}. This 
implies by Lemma~\ref{consensus} that the sequence $x^k$ converges almost 
surely to $(x^\star, \ldots, x^\star)$ where $x^\star$ is a minimizer of 
Problem~\eqref{eq:pb-distrib}. \\
We therefore need to prove that the operator $\oT^{(\xi^{k+1})}$ is translated
into the DAPD algorithm. The definition~\eqref{def:h} of $h$ shows that 
\[
h^*(\phi) = \sum_{\epsilon \in E} \iota_{{\cal C}_2^\perp}(\phi_\epsilon) 
\]
where ${\cal C}_2^\perp = \{ (x,-x)\, : \, x \in {\cal X} \}$. Therefore,
writing 
\[
(\eta^{k+1}, q^{k+1} = Mv^{k+1}) = \oT(\lambda^k, y^k =Mx^k), 
\]
Equation~\eqref{vu-l} shows that 
\[
\eta^{k+1}_\epsilon = 
\text{proj}_{{\cal C}_2^\perp}(\lambda^k_\epsilon + \rho^{-1} y^k_\epsilon) .
\]  
Notice that contrary to the case of the synchronous algorithm DADMM+, 
there is no reason here for which 
$\proj_{{\cal C}_2^\perp}(\lambda^k_\epsilon) = 0$.
Getting back to 
$(\lambda^{k+1}, M x^{k+1}) = \oT^{(\xi^{k+1})}(\lambda^{k}, y^k = M x^{k})$, 
we therefore obtain that for all $n \in \xi^{k+1}$ and all $m \sim n$, 
\begin{align*} 
\lambda_{\{n,m\}}^{k+1}(n) &= 
    \frac{\lambda_{\{n,m\}}^k(n) - \lambda_{\{n,m\}}^k(m)}{2} \\
& 
\ \ \ \ \ \ \ \ 
+ \frac{y_{\{n,m\}}^k(n) - y_{\{n,m\}}^k(m)}{2\rho} \\
&= \frac{\lambda_{\{n,m\}}^k(n) - \lambda_{\{n,m\}}^k(m)}{2} +
    \frac{x^k_n - x^k_m}{2\rho} . 
\end{align*}
Recall now that Eq.~\eqref{vu-y} can be rewritten as 
$$
q^{k+1} \!\!= \!\!\argmin_{w\in{\cal S}} \bar g(w)+\langle \nabla \bar f(y^k),\!w\rangle +\frac {\|w\!-\!y^k\!+\!\tau(2\lambda^{k+1}\!\!-\!\lambda^k)\|^2}{2\tau}
$$
Upon noting that $\bar g(Mx)=g(x)$ and $\langle\nabla \bar f(y^k),Mx\rangle =
\langle (M^{-1})^* \nabla f(M^{-1} M x^{k}),Mx\rangle = 
\langle \nabla f(x^{k}),x\rangle$, the above equation becomes
\begin{multline*} 
v^{k+1} = \argmin_{w\in{\cal X}} g(w)
+\langle \nabla f(x^{k}),w\rangle \\
+\frac {\|M(w-x^{k})+\tau (2\lambda^{k+1}-\lambda^k)\|^2}{2\tau} . 
\end{multline*} 
Recall that $(M^*M x)_n = d_n x_n$. Hence, for all $n \in \xi^{k+1}$, we get 
after some computations  
\begin{multline*} 
x^{k+1}_n = 
\prox_{\tau g_n/d_n} \Bigl[ x_n^k - \frac{\tau}{d_n}\nabla f_n(x_n^k) \\ 
- \frac{\tau}{d_n} ( M^*(2\lambda^{k+1} - \lambda^k))_n \Bigr] . 
\end{multline*} 
Using the identity $(M^*y)_n = \sum_{m : \{n,m\} \in E} y_{\{n,m\}}(n)$, one
can check that this equation coincides with the $x-$update equation in the 
DAPD algorithm. 

\section*{Acknowledgement}

 The authors would like to thank the anonymous reviewers for their useful comments.

\bibliographystyle{IEEEbib}
\bibliography{math}

\def\cprime{$'$} \def\cdprime{$''$} \def\cprime{$'$} \def\cprime{$'$}
\begin{thebibliography}{10}

\bibitem{vu2013splitting}
B.~C. V{\~ u},
\newblock ``A splitting algorithm for dual monotone inclusions involving
  cocoercive operators,''
\newblock {\em Advances in Computational Mathematics}, vol. 38, no. 3, pp.
  667--681, 2013.

\bibitem{cond-13}
L.~Condat,
\newblock ``A primal-dual splitting method for convex optimization involving
  {L}ipschitzian, proximable and linear composite terms,''
\newblock {\em Journal of Optimization Theory and Applications}, vol. 158, no.
  2, pp. 460--479, 2013.

\bibitem{gabay1976dual}
D.~Gabay and B.~Mercier,
\newblock ``A dual algorithm for the solution of nonlinear variational problems
  via finite element approximation,''
\newblock {\em Computers \& Mathematics with Applications}, vol. 2, no. 1, pp.
  17--40, 1976.

\bibitem{gab-83}
D.~Gabay,
\newblock ``Application of the method of multipliers to variational
  inequalities,''
\newblock in {\em \emph{M. Fortin and R. Glowinski, editors}, Augmented
  Lagrangian Methods: Applications to the solution of Boundary-Value Problems}.
  North-Holland, Amsterdam, 1983.

\bibitem{nesterov2012}
Y.~Nesterov,
\newblock ``Efficiency of coordinate descent methods on huge-scale optimization
  problems,''
\newblock {\em SIAM Journal on Optimization}, vol. 22, no. 2, pp. 341--362,
  2012.

\bibitem{fercoq2013}
O.~Fercoq and P.~Richt{\'a}rik,
\newblock ``Accelerated, parallel and proximal coordinate descent,''
\newblock {\em arXiv preprint arXiv:1312.5799}, 2013.

\bibitem{bavcak2013}
M.~Ba{\v{c}}{\'a}k,
\newblock ``The proximal point algorithm in metric spaces,''
\newblock {\em Israel Journal of Mathematics}, vol. 194, no. 2, pp. 689--701,
  2013.

\bibitem{livre-combettes}
H.~H. Bauschke and P.~L. Combettes,
\newblock {\em Convex analysis and monotone operator theory in {H}ilbert
  spaces},
\newblock CMS Books in Mathematics/Ouvrages de Math\'ematiques de la SMC.
  Springer, New York, 2011.

\bibitem{sch-rib-gia-sp08}
I.D. Schizas, A.~Ribeiro, and G.B. Giannakis,
\newblock ``Consensus in ad hoc {WSN}s with noisy links - {P}art {I}:
  Distributed estimation of deterministic signals,''
\newblock {\em IEEE Trans. on Signal Processing}, vol. 56, no. 1, pp. 350--364,
  2008.

\bibitem{ouyang2013}
H.~Ouyang, N.~He, L.~Tran, and A.~Gray,
\newblock ``Stochastic {A}lternating {D}irection {M}ethod of {M}ultipliers,''
\newblock in {\em International Conference on Machine Learning}, 2013, pp.
  80--88.

\bibitem{iut-cdc13}
F.~Iutzeler, P.~Bianchi, Ph. Ciblat, and W.~Hachem,
\newblock ``Asynchronous distributed optimization using a randomized
  {A}lternating {D}irection {M}ethod of {M}ultipliers,''
\newblock in {\em Proc. IEEE Conf. Decision and Control (CDC)}, Florence,
  Italy, Dec. 2013.

\bibitem{pesquet-stochastic}
P.~L. Combettes and J.-C. Pesquet,
\newblock ``Stochastic quasi-{F}ej\'er block-coordinate fixed point iterations
  with random sweeping,''
\newblock {\em arXiv preprint arXiv:1404.7536}, 2014.

\bibitem{boyd2011distributed}
S.~Boyd, N.~Parikh, E.~Chu, B.~Peleato, and J.~Eckstein,
\newblock ``Distributed optimization and statistical learning via the
  {A}lternating {D}irection {M}ethod of {M}ultipliers,''
\newblock {\em Foundations and Trends in Machine Learning}, vol. 3, no. 1, pp.
  1--122, 2011.

\bibitem{cevher2014convex}
V.~Cevher, S.~Becker, and M.~Schmidt,
\newblock ``Convex optimization for big data: Scalable, randomized, and
  parallel algorithms for big data analytics,''
\newblock {\em Signal Processing Magazine, IEEE}, vol. 31, no. 5, pp. 32--43,
  2014.

\bibitem{tsitsiklis:phd-1984}
J.~Tsitsiklis,
\newblock {\em {Problems in Decentralized Decision Making and Computation}},
\newblock Ph.D. thesis, Massachusetts Institute of Technology, 1984.

\bibitem{tsitsiklis:bertsekas:athans:tac-1986}
J.~Tsitsiklis, D.~Bertsekas, and M.~Athans,
\newblock ``Distributed asynchronous deterministic and stochastic gradient
  optimization algorithms,''
\newblock {\em Automatic Control, IEEE Transactions on}, vol. 31, no. 9, pp.
  803 -- 812, sep 1986.

\bibitem{kushner-siam87}
H.~J. Kushner and G.~Yin,
\newblock ``Asymptotic properties of distributed and communicating stochastic
  approximation algorithms,''
\newblock {\em SIAM J. Control Optim.}, vol. 25, pp. 1266 -- 1290, 1987.

\bibitem{lop-sayed-asap06}
C.~Lopes and A.H. Sayed,
\newblock ``Distributed processing over adaptive networks,''
\newblock in {\em Adaptive Sensor Array Processing Workshop}, June 2006, pp.
  1--5.

\bibitem{nedic:ozdaglar:parrilo:tac-2010}
A.~Nedic, A.~Ozdaglar, and P.A. Parrilo,
\newblock ``{Constrained Consensus and Optimization in Multi-Agent Networks},''
\newblock {\em IEEE Transactions on Automatic Control}, vol. 55, no. 4, pp.
  922--938, April 2010.

\bibitem{bia-for-hac-IT13}
P.~Bianchi, G.~Fort, and W.~Hachem,
\newblock ``Performance of a distributed stochastic approximation algorithm,''
\newblock {\em IEEE Transactions on Information Theory}, vol. 59, no. 11, pp.
  7405 -- 7418, November 2013.

\bibitem{bia-jak-TAC13}
P.~Bianchi and J.~Jakubowicz,
\newblock ``Convergence of a multi-agent projected stochastic gradient
  algorithm for non-convex optimization,''
\newblock {\em IEEE Transactions on Automatic Control}, vol. 58, no. 2, pp. 391
  -- 405, February 2013.

\bibitem{nedic2013distributed}
A.~Nedic and A.~Olshevsky,
\newblock ``Distributed optimization over time-varying directed graphs,''
\newblock in {\em IEEE conf. on Decision and Control}, Florence, Italy, 2013.

\bibitem{iutzeler2013explicit}
F.~Iutzeler, P.~Bianchi, Ph. Ciblat, and W.~Hachem,
\newblock ``Explicit convergence rate of a distributed alternating direction
  method of multipliers,''
\newblock {\em arXiv preprint arXiv:1312.1085}, 2013.

\bibitem{tsianos2014efficient}
K.~Tsianos and M.~Rabbat,
\newblock ``Efficient distributed online prediction and stochastic optimization
  with approximate distributed mini-batches,''
\newblock {\em arXiv preprint arXiv:1403.0603}, 2014.

\bibitem{mokhtariapproximate}
A.~Mokhtari, Q.~Ling, and A.~Ribeiro,
\newblock ``An approximate newton method for distributed optimization,''
\newblock in {\em ICASSP (submitted)}, 2015.

\bibitem{jaggi2014communication}
M.~Jaggi, V.~Smith, M.~Tak{\'a}{\v{c}}, J.~Terhorst, T.~Hofmann, and M.~I.
  Jordan,
\newblock ``Communication-efficient distributed dual coordinate ascent,''
\newblock {\em arXiv preprint arXiv:1409.1458}, 2014.

\bibitem{morral2014success}
G.~Morral, P.~Bianchi, and G.~Fort,
\newblock ``Success and failure of adaptation-diffusion algorithms for
  consensus in multi-agent networks,''
\newblock {\em arXiv preprint arXiv:1410.6956}, 2014.

\bibitem{shi2014extra}
W.~Shi, Q.~Ling, G.~Wu, and W.~Yin,
\newblock ``Extra: An exact first-order algorithm for decentralized consensus
  optimization,''
\newblock {\em arXiv preprint arXiv:1404.6264}, 2014.

\bibitem{wei-ozd-arxiv13}
E.~Wei and A.~Ozdaglar,
\newblock ``On the {$O(1/k)$} convergence of asynchronous distributed
  {A}lternating {D}irection {M}ethod of {M}ultipliers,''
\newblock {\em arXiv preprint arXiv:1307.8254}, 2013.

\bibitem{Roc70}
R.~T. Rockafellar,
\newblock {\em Convex analysis},
\newblock Princeton Mathematical Series, No. 28. Princeton University Press,
  Princeton, N.J., 1970.

\bibitem{he-yua-siam12}
B.~He and X.~Yuan,
\newblock ``Convergence analysis of primal-dual algorithms for a saddle-point
  problem: From contraction perspective,''
\newblock {\em SIAM Journal on Imaging Sciences}, vol. 5, no. 1, pp. 119--149,
  2012.

\bibitem{daubechies2004}
I.~Daubechies, M.~Defrise, and C.~De~Mol,
\newblock ``An iterative thresholding algorithm for linear inverse problems
  with a sparsity constraint,''
\newblock {\em Communications on pure and applied mathematics}, vol. 57, no.
  11, pp. 1413--1457, 2004.

\bibitem{for-can-gia-jmlr10}
P.~A. Forero, A.~Cano, and G.~B. Giannakis,
\newblock ``Consensus-based distributed support vector machines,''
\newblock {\em The Journal of Machine Learning Research}, vol. 99, pp.
  1663--1707, 2010.

\bibitem{agarwal2011reliable}
A.~Agarwal, O.~Chapelle, M.~Dud{\'\i}k, and J.~Langford,
\newblock ``A reliable effective terascale linear learning system,''
\newblock {\em arXiv preprint arXiv:1110.4198}, 2011.

\bibitem{ram-vee-ned-tac10}
S.S. Ram, V.V. Veeravalli, and A.~Nedic,
\newblock ``Distributed and recursive parameter estimation in parametrized
  linear state-space models,''
\newblock {\em IEEE Trans. on Automatic Control}, vol. 55, no. 2, pp. 488--492,
  2010.

\bibitem{pes-rep-(sub)14}
J.-C. {Pesquet} and A.~{Repetti},
\newblock ``{A Class of Randomized Primal-Dual Algorithms for Distributed
  Optimization},''
\newblock {\em ArXiv e-prints}, June 2014.

\bibitem{rauber2013parallel}
T.~Rauber and G.~R{\"u}nger,
\newblock {\em Parallel programming: For multicore and cluster systems},
\newblock Springer Science \& Business, 2013.

\bibitem{ned-ozd-tac09}
A.~Nedic and A.~Ozdaglar,
\newblock ``Distributed subgradient methods for multi-agent optimization,''
\newblock {\em Automatic Control, IEEE Transactions on}, vol. 54, no. 1, pp.
  48--61, Jan 2009.

\bibitem{nedic2011}
A.~Nedic,
\newblock ``Asynchronous broadcast-based convex optimization over a network,''
\newblock {\em IEEE Transactions on Automatic Control}, vol. 56, no. 6, pp.
  1337--1351, 2011.

\bibitem{boyd2006randomized}
Stephen Boyd, Arpita Ghosh, Balaji Prabhakar, and Devavrat Shah,
\newblock ``Randomized gossip algorithms,''
\newblock {\em Information Theory, IEEE Transactions on}, vol. 52, no. 6, pp.
  2508--2530, 2006.

\end{thebibliography}

\end{document}